\documentclass[12pt,a4paper]{amsart}

\usepackage[utf8]{inputenc}
\usepackage[english]{babel}

\usepackage{latexsym}
\usepackage{amsfonts}
\usepackage{amsmath}
\usepackage{amssymb}
\usepackage{graphicx}
\usepackage{color}
\usepackage{enumitem}

\usepackage{amsthm}
\usepackage{pstricks}
\usepackage{csquotes}
\usepackage{fullpage}
\usepackage{url}
\usepackage[ocgcolorlinks, linkcolor=red]{hyperref}

\usepackage{algorithm}
\usepackage{algcompatible}

\usepackage[numbers]{natbib}

\newcommand{\R}{\mathbb{R}}

\newcommand{\C}{\mathbb{C}}

\newcommand{\ov}[1]{\overline{#1}}

\newcommand\supp{\operatorname{supp}} 
 
\newcommand\spec{\operatorname{Spec}}

\newcommand\p{\partial}

\newcommand\osupp{\operatorname{osupp}}

\newtheorem{thm}{Theorem}[section]

\newtheorem{lem}[thm]{Lemma}
\newtheorem{prop}[thm]{Proposition}

\newtheorem{rem}[thm]{Remark}

\theoremstyle{definition}

\numberwithin{equation}{section}

\begin{document}

\title[]{Experimental detection of inclusions via a monotonicity method for the time-harmonic elastic wave equation}

\date{}

\author[]{Sarah Eberle-Blick}
\address{Sarah Eberle-Blick, Mathematical Institute for Machine Learning and Data Science,
Catholic University of Eichst\"att-Ingolstadt, Germany}
\email{sarah.eberle-blick@ku.de}

\author[]{Jochen Moll}
\address{Jochen Moll, Department of Mechanical Engineering, University of Siegen, Germany}
\email{jochen.moll@uni-siegen.de}

\begin{abstract}
We are concerned with the reconstruction of inclusions in elastic bodies based on measurements from a laboratory experiment. In doing so, we solve the inverse problem of the time-harmonic elastic wave equation, in contrast to the stationary wave equation and the corresponding lab experiment proposed earlier in \cite{EM21}. The investigation of the harmonic problem leads to a better reconstruction compared to the stationary one. Since we deal with real measurement data, we have to take into account, that those measurements always include measurement errors, so that we have to handle noisy data. Thus, we consider the linearized monotonicity method for noisy data and introduce a modified version of this method. Based on this, we reconstruct the inclusions numerically.
\end{abstract}

\maketitle

\section{Introduction} 
\noindent
We want to find inclusions in elastic bodies and reconstruct their shape. Mathematically, this is an inverse problem, meaning that only boundary measurements are known. Specifically, a time-harmonic elastic wave equation is considered, which provides the necessary information. Inverse problems of elasticity have many applications such as material analysis, geophysical and medical examinations, etc. (see e.g. \cite{CK98,GK08,CC06, Pot06} and the references therein), but we are focusing on material analysis.
\\
\\
In this work, we perform a lab experiment in order to obtain the required data and apply a monotonicity method for the reconstruction of the inclusions based on real data.
\\
\\
The paper is structured as follows: 
\\
We start with a short overview of the considered problem. Then we introduce the experimental setup and give the background of the applied numerical method, i.e. the linearized monotonicity method for the elasto-oscillatory wave equation. In more detail, we modify this method for the specific requirements of our experimental data. Finally, we present our numerically simulations for reconstructing the inclusions.

\section{Problem Statement}
\noindent
We follow the description and introduction of the problem from \cite{EP24}. Let us introduce the problem of the time-harmonic elastic wave equation as follows. We consider a body $\Omega\subset \mathbb{R}^3$ with Neumann boundary $\Gamma_N$ and Dirichlet boundary $\Gamma_D$ such that
$$
\Gamma_N, \Gamma_D \subset \p\Omega \text{ are open }, \qquad \Gamma_N \neq \emptyset, \qquad
\p \Omega = \ov{\Gamma}_N \cup \ov{\Gamma}_D
$$ 
and $\nu$ is the outward pointing unit normal vector to the boundary $\p \Omega$.
\\
\\
Further on, the specific material properties are described via the Lamé parameters $\lambda,\mu \in L^\infty_+(\Omega)$, which determine the elastic  properties of the material, $\rho \in L^\infty_+(\Omega)$ is the density of the material,
and $\omega \neq 0$ a non-resonance angular frequency of the oscillation. 
\\
\\
All in all, the displacement field $u : \Omega \to \R^3$, $u \in H^1(\Omega)^3$ of the solid body $\Omega$ fulfills the boundary value problem
\begin{align}  \label{eq_bvp1}
\begin{cases}
\nabla \cdot (\C\,  \hat \nabla u )  + \omega^2\rho u &= 0, \quad\text{in}\,\,\Omega,\\
\hspace{1.8cm}(\C\,  \hat \nabla u ) \nu  &= g, \quad\text{on}\,\,\Gamma_N, \\
\hphantom{\hspace{1.8cm}(\C\,  \hat \nabla u ) } u  &= 0, \quad\text{on}\,\,\Gamma_D. 
\end{cases}
\end{align}
The involved differential operators are given as 
$\hat \nabla u  = \frac{1}{2}(\nabla u + (\nabla u)^T)$, which is the symmetrization of the Jacobian
or the strain tensor, 
and $\C$, which is the 4th order tensor defined by
\begin{equation} \label{eq_Cdef}
\begin{aligned}
(\C A)_{ij} = 2\mu A_{ij} + \lambda \operatorname{tr}(A) \delta_{ij},
\quad \text{ where } A \in \R^{3 \times 3},
\end{aligned}
\end{equation}
and $\delta_{ij}$ is the Kronecker delta. 
\\
\\
Finally, the Neumann boundary condition acts as the source
of the oscillation via the vector field $g \in L^2(\Gamma_N)^3$ and this boundary condition $g$ specifies the traction on the surface $\p \Omega$.
\\
\\
We assume that $\omega \in \R$ is not a resonance frequency, so that problem \eqref{eq_bvp1} has a unique solution for a given boundary condition $g\in L^2(\Gamma_N)^3$. Based on this, we can thus define the Neumann-to-Dirichlet map
$\Lambda:L^2(\Gamma_N)^3 \to L^2(\Gamma_N)^3$, as
\begin{equation} \label{eq_ND_map}
\begin{aligned}
\Lambda: g \mapsto u|_{\Gamma_N}. 
\end{aligned}
\end{equation}
Thus $\Lambda$  maps the traction to the displacement $u|_{\Gamma_N}$ on the boundary.
\\
\\
\noindent
In this paper we are concerned with the reconstruction of inclusions with real data (see, e.g. \cite{EH21} for the stationary case) based on a monotonicity method
for elasticity especially for the time-harmonic problem as in \cite{EP24}, \cite{EP24a} and \cite{E25}.
\\
\\
We are more specifically interested in determining the shape of perturbations of the material parameters
$\lambda, \mu$ and $\rho$, in an otherwise homogeneous  background material characterized by the constant
material parameters $\lambda_0, \mu_0, \rho_0 > 0$. We would  ideally want to reconstruct the sets
$$
\supp(\lambda-\lambda_0), \quad
\supp(\mu-\mu_0),  \quad
\supp(\rho-\rho_0)
$$
from the Neumann-to-Dirichlet map $\Lambda$. 
\\
\\
We base our simulations on the linearized monotonicity method in order to reconstruct the set $\osupp(D)$, where 
$$
D = \supp(\lambda-\lambda_0) \cup \supp(\mu-\mu_0) \cup \supp(\rho- \rho_0),
$$
with $\osupp(D)$ as outer support of $D$ as we did in \cite{EP24a} with a linearized method. 
\\
\\
The linearized method presented here is thus an improvement of the method in
\cite{EP24a} in terms of computation time. The linearized method is drastically faster to compute,
which is the original motivation for considering the linearized method, and is therefore of great interest
from a computational point of view, see \cite{HU13,HS10}.
The reason is, that in comparison to the standard method (see, e.g. \cite{EP24}), the linearized method does not require calculating forward problems for different test inclusions or experimental measurements of test subjects with different test inclusions.

\section{Experimental Setup} \label{sec_experiment}
\noindent
These experiments build on those already carried out by the authors for the stationary wave equation (see \cite{EM21}), so that the already gained know-how can be used for the new laboratory experiments. Thus, the setup of the experiment was further developed together with the cooperation partner of the Fraunhofer Institute for Structural Durability and System Reliability LBF who also assisted with setting up the equipment.
\\
\\
For a better comparison of the results from the static and oscillatory case, we tested the same samples in a similar measurement setup. This means that Makrolon plates with various aluminum inclusions (see \cite{EM21}) are tested. 
\\
\\
In the case of the elasto-oscillatory problem, i.e. for time-harmonic Neumann data, a "sinusoidal" force (from $-400$N to $+400$N) is attached to the edge of the Makrolon plate and the resulting displacement is measured and transferred to the frequency range via Fourier transformation. Even if this sample is flat and would therefore justify a 2D calculation, all simulations and experiments are carried out in a 3D setup. 
\\
\\
\begin{rem}
The Neumann data we deal with are vibrations. Hence, we are only interested in the displacements resulting from these vibrations and not in solid-borne sound velocities or the resulting wavelengths. The frequency range (20Hz - 27Hz, 40Hz - 45Hz and 55Hz - 57Hz) was chosen for technical reasons. It must be ensured that the optical measuring sensor can detect the displacements. Further on, frequencies resulting in resonance phenomena are excluded. We searched for the resonance frequencies via experiments. It should be noted that under the excitation frequency, the shape is reconstructed via the difference of the displacement of the body with an inclusion and the displacement of the body without inclusion and not via sampling of the probe with waves. Further, if the frequency tends to zero, we find ourselves in the static case which provides the shape reconstruction as described in \cite{EH21}, \cite{EH22}, \cite{EH23}.
\end{rem}
\noindent
\\
The resulting data is then used to detect and reconstruct the inclusions using linearized monotonicity methods. We want to mention, that they cover a larger frequency range and more general parameters than the specific laboratory experiment. Contrary to the laboratory experiment, even higher frequencies can be investigated in the numerical simulations based on artificial data, but in the laboratory experiment, only the specified frequencies are investigated for the reasons mentioned above.
\\
\\
The setup is given as follows:
\\
\\
We applied a force via a sine-sweep between $-400$ and $400$N on two boundary patches (see Figure \ref{setup_overview}: item 3) which are fixed at the Makrolon plate (see Figure \ref{setup_overview}: item 5) and connected with a TV 50101 shaker (TIRA, Germany) (see Figure \ref{setup_overview}: item 1). The Makrolon plate is fixed as shown in  Figure \ref{setup_overview}: item 4. The resulting displacement is measured with an OM70 high performance optical distance sensor (Baumer GmbH, Germany) (see Figure \ref{setup_overview}: item 2).

\begin{figure}[H]
\centering 
\includegraphics[width=0.8\textwidth]{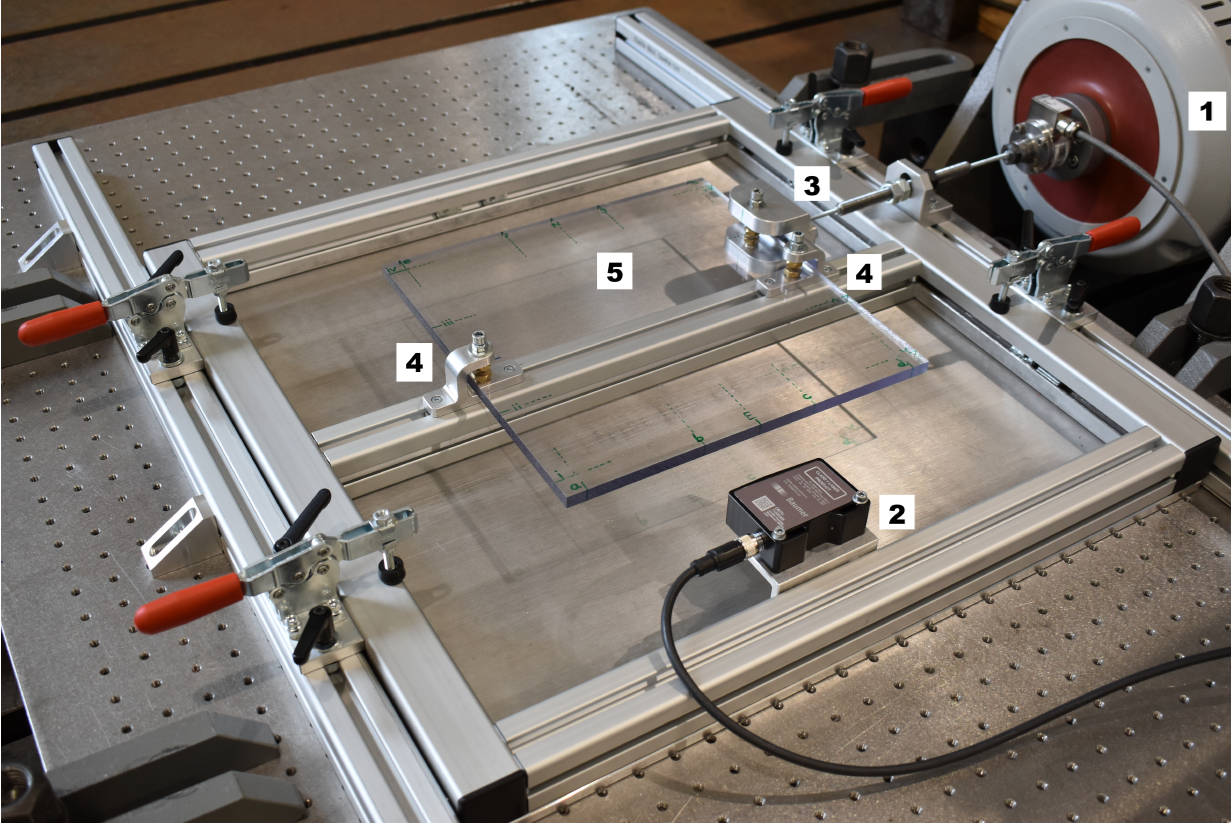}
\caption{Setup overview: 1: shaker, 2: optical sensor, 3: connection shaker and Makrolon plate, 4:  two mounting points, where the Makrolon plate is fixed at the frame, 5: Makrolon plate\label{setup_overview}}
\end{figure}
\noindent
\\
Finally, we take a look at the schematic setting of the Makrolon plate:
\begin{figure}[H]
\centering 
\includegraphics[width=0.57\textwidth]{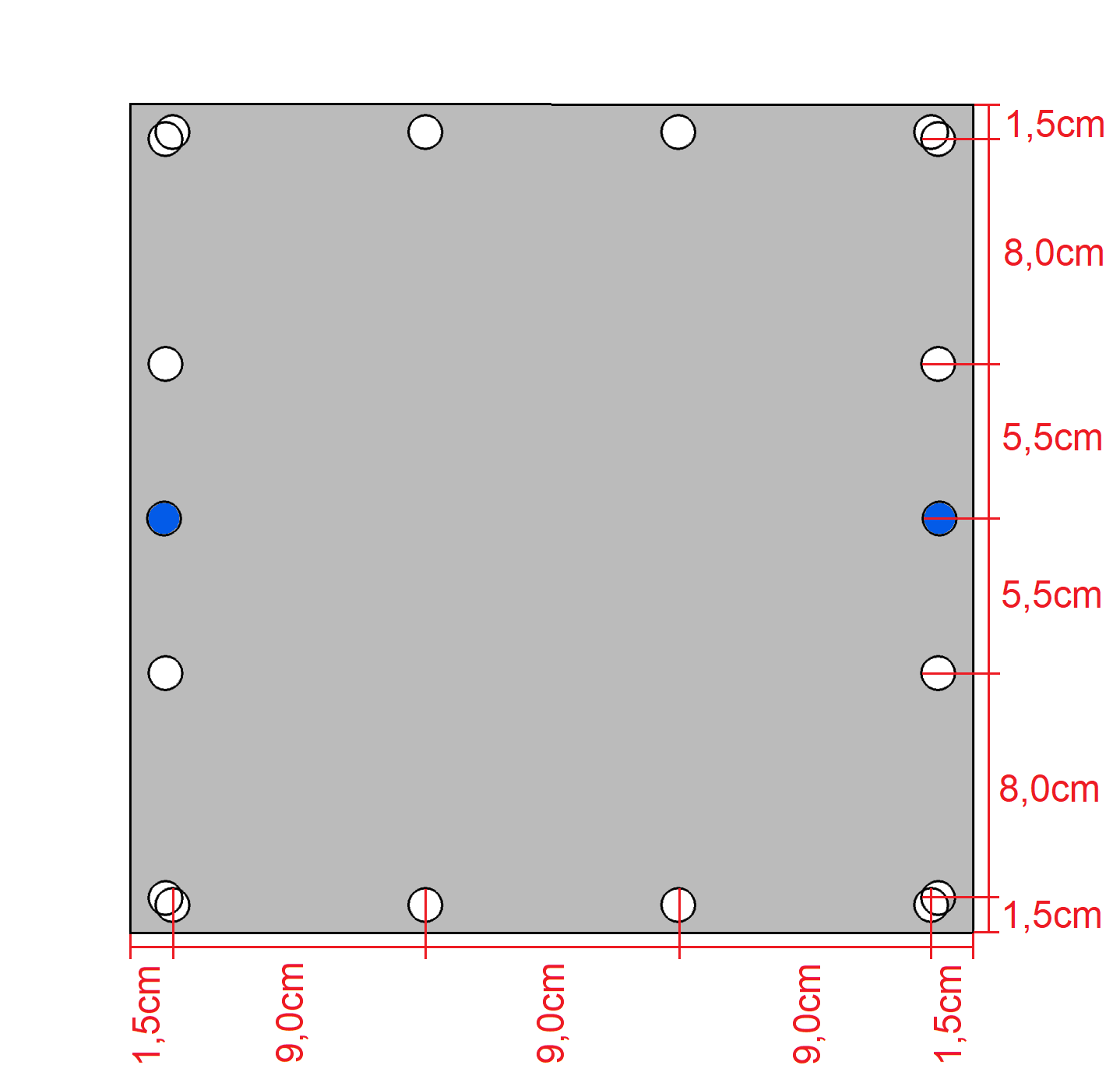}
\caption{Schematic setting: Makrolon plate with positions of the mounting (circles): Dirichlet conditions in blue and Neumann conditions in white.}
\end{figure}

\section{Linearized monotonicity test} \label{sec_mono_shape}

\noindent
In this section we will introduce the linearized monotonicity tests as considered in \cite{EP24a}. First of all, we give the required background:
Our notations are to a large extent the same notations as in \cite{EP24}.
\\
\\
Let the space $H^k(\Omega)$ denote the $L^2(\Omega)$ based Sobolev space with $k$ weak derivatives. In addition, we define the space
$$	
L^\infty_+(\Omega) := \big\{ f \in L^\infty(\Omega) \;:\; \operatorname{essinf}_\Omega f > 0 \big\}.
$$
\noindent
\\
Next, we state the bilinear form $B$ for problem \eqref{eq_bvp1} as
\begin{align}  \label{eq_weak}
B(u,v)  := -\int_\Omega 
2 \mu \hat \nabla u :\hat \nabla v + \lambda \nabla \cdot u \nabla \cdot v - \omega^2\rho u\cdot v\,dx, 
\end{align}
for all $u,v \in H^1(\Omega)^3$.
\\
\\
A weak solution to \eqref{eq_bvp1} is defined as a $u \in H^1(\Omega)^3$,
for which $u|_{\Gamma_D} = 0$.
\\
\\
If, we assume that the material parameter $\lambda, \mu, \rho$ are regular and $u$ solves \eqref{eq_bvp1} with $g$, and
$v$ solves \eqref{eq_bvp1} with $h$, applying integration by parts leads to
\begin{align}  \label{eq_NDmap}
B(u,v)  = -\int_{ \Gamma_N} g \cdot v \,dS = -( g, \Lambda h)_{L^2(\Gamma_N)^3}. 
\end{align}
\\
\\
In \cite{EP24} (see Corollary 3.4) we proved the existence and uniqueness of a weak solution to \eqref{eq_bvp1}, when $\omega$ is not a resonance frequency.
\\
\\
This is important, since from the existence and uniqueness of a weak solution to \eqref{eq_bvp1}, we can directly conclude the well-definition of 
the Neumann-to-Dirichlet map $\Lambda$ given by \eqref{eq_ND_map}. 
\\
\\
Hence, we use the abbreviations of the boundary condition in \eqref{eq_bvp1} if the boundary is clear from the context
\begin{align*} 
\gamma_{\mathbb{C},\Gamma} u  = (\C \, \hat \nabla u ) \nu |_{\Gamma} \quad\text{or}\quad \gamma_\mathbb{\C} u .
\end{align*}
\\
\\
Since we will reconstruct the shape of the unknown inclusions, we introduce the framework of an outer support.
\\
\\
The outer support (with respect to $\p\Omega$) of a measurable function $f: \Omega \to \R$   is defined as 
$$
\operatorname{osupp} (f) := \Omega \setminus \bigcup \, \big \{  U \subset \Omega \,:\, U  
\text{ is relatively open and connected to $\p \Omega$},
f|_U \equiv 0  \big\}.
$$
For more on this see \cite{HU13}.
It will be convenient to extend this definition to sets. We define the outer support of a measurable
set $D \subset \Omega$ 
(with respect to $\p\Omega$) as
\begin{align}  \label{eq_def_osupp}
\osupp (D) := \osupp(\chi_D)
\end{align}
where $\chi_D$ is the characteristic function of the set $D$. 

\medskip
\noindent
For conducting the test themselves, we use the following results from \cite{EP24} and \cite{EP24a}.
\\
\begin{lem}[see Lemma 4.1 from \cite{EP24}] \label{lem_monotonicity_ineq1}
Let $\mu_j,\lambda_j,\rho_j \in L^\infty_+(\Omega)$, for $j=1,2$ and $ \omega \neq 0$.
Let $u_j$ denote the solution to \eqref{eq_bvp1} where $\mu=\mu_j, \lambda= \lambda_j$ and $ \rho=\rho_j$,
with the boundary value  $g$. There exists a finite dimensional subspace $V\subset L^2( \Gamma_N)^3$, 
such that
\begin{align*}  
\big(  (\Lambda_2 - \Lambda_1)g, \,g  \big )_{L^2(\Gamma_N)^3} 
\geq
\int_\Omega 
2(\mu_1-\mu_2 ) |\hat \nabla u_1 |^2 + (\lambda_1 - \lambda_2 ) |\nabla \cdot u_1|^2  + \omega^2(\rho_2-\rho_1) |u_1|^2 \,dx, 
\end{align*}
when $g \in V^\perp$. 
\end{lem}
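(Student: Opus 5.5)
The plan is to follow the standard monotonicity argument for the time-harmonic case (as in \cite{EP24}), which combines an energy identity with a finite-dimensional correction for the indefinite $\omega^2\rho$ term. The starting point is the weak formulation \eqref{eq_NDmap}. For $j=1,2$ write
$$
B_j(u,v) = -\int_\Omega 2\mu_j \hat\nabla u:\hat\nabla v + \lambda_j \nabla\cdot u\,\nabla\cdot v - \omega^2\rho_j u\cdot v\,dx .
$$
Testing with $u_1,u_2$ gives
$$
(\Lambda_j g,g) = -B_j(u_j,u_j), \qquad B_2(u_2,u_1) = -(g,\Lambda_1 g) = B_1(u_1,u_1).
$$
Expanding the nonnegative quantity $-B_2$-type form evaluated at $u_2-u_1$ would then give
\begin{align*}
((\Lambda_2-\Lambda_1)g,g) = {}& \int_\Omega 2(\mu_1-\mu_2)|\hat\nabla u_1|^2 + (\lambda_1-\lambda_2)|\nabla\cdot u_1|^2 + \omega^2(\rho_2-\rho_1)|u_1|^2\,dx \\
&+ \int_\Omega 2\mu_2|\hat\nabla(u_2-u_1)|^2 + \lambda_2|\nabla\cdot(u_2-u_1)|^2\,dx - \omega^2\int_\Omega \rho_2|u_2-u_1|^2\,dx .
\end{align*}
This is a routine algebraic identity obtained from $-B_2(u_2-u_1,u_2-u_1)$ and the two relations above. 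I will verify the signs carefully, since $B$ carries a minus sign.

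With the identity in hand, the lemma reduces to showing that the remainder
$$
R(g) := \int_\Omega 2\mu_2|\hat\nabla w|^2 + \lambda_2|\nabla\cdot w|^2\,dx - \omega^2\int_\Omega\rho_2|w|^2\,dx, \qquad w = u_2-u_1,
$$
is nonnegative for $g$ in a subspace of finite codimension. This is the main obstacle, because the $L^2$ term has the wrong sign.

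To handle it, I will use Korn's inequality on $\{v\in H^1(\Omega)^3 : v|_{\Gamma_D}=0\}$. Together with compactness of $H^1\hookrightarrow L^2$, this shows that the quadratic form
$$
\int_\Omega 2\mu_2|\hat\nabla v|^2 + \lambda_2|\nabla\cdot v|^2 - \omega^2\rho_2|v|^2\,dx
$$
is a compact perturbation of a coercive form. Concretely, via the spectral decomposition of the associated self-adjoint operator with compact resolvent, the form is nonnegative on the orthogonal complement of the finitely many eigenfunctions with eigenvalue below $\omega^2$. Call their span $W$, of finite dimension $d$.

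It remains to transport this condition from $w$ back to $g$. The map $g\mapsto w = u_2-u_1$ is linear and bounded from $L^2(\Gamma_N)^3$ to $H^1(\Omega)^3$; this uses the well-posedness result (Corollary 3.4 of \cite{EP24}) for non-resonant $\omega$. The constraint that $w$ be orthogonal to $W$ in $L^2(\Omega,\rho_2)$ amounts to $d$ bounded linear functionals $g\mapsto (w,\phi_k)_{\rho_2}$. By Riesz representation, each is of the form $(g,h_k)_{L^2(\Gamma_N)^3}$.

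I then set $V = \operatorname{span}\{h_k\}$. For $g\in V^\perp$ we get $w\perp W$, hence $R(g)\ge 0$, and the claimed inequality follows. The only delicate points are the sign bookkeeping in the identity and checking that the orthogonality is taken in the inner product matching the eigenproblem; both are technical rather than conceptual.
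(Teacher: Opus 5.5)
Your argument is correct and is essentially the standard proof of this result. The paper does not reprove the lemma; it cites Lemma 4.1 of \cite{EP24}, whose proof likewise writes $((\Lambda_2-\Lambda_1)g,g)$ as the claimed right-hand side plus the remainder $-B_2(u_2-u_1,u_2-u_1)$, uses Korn's inequality and Rellich compactness to make that remainder nonnegative off a finite-dimensional subspace, and pulls this subspace back to $L^2(\Gamma_N)^3$ via the adjoint of the bounded map $g\mapsto u_2-u_1$. One cosmetic slip: $-B_2(w,w)$ is not a ``nonnegative quantity'' in general, which is exactly why the subspace $V$ is needed, as you note yourself later.
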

\noindent
\\
Before we take a look at the Fréchet derivative of the Neumann-to-Dirichlet
map, we need some estimates which are given in the following lemmata.
\begin{lem}[see Lemma 3.1 from \cite{EP24a}]\label{lem_ab_1}
Let $\mu_j,\lambda_j,\rho_j \in L^\infty_+(\Omega)$, for $j=1,2$ and $ \omega \neq 0$.
Let $u_j$ denote the solution to \eqref{eq_bvp1} where $\mu=\mu_j, \lambda= \lambda_j$ and $ \rho=\rho_j$,
with the boundary value $g$. Then
\begin{align*}
\left ((\Lambda_2  - \Lambda_1) g, g \right )_{L^2(\Gamma_{\textup{N}})^3}
\leq &\int_\Omega2(\mu_1-\mu_2)|\hat\nabla u_2|^2+(\lambda_1-\lambda_2)|\nabla\cdot u_2|^2+\omega^2(\rho_2-\rho_1)|u_2|^2dx \\
&+\omega^2\int_\Omega\rho_1|u_1-u_2|^2dx .
\end{align*}
\end{lem}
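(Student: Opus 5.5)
We prove Lemma~\ref{lem_ab_1} with the standard energy-identity argument, using the bilinear forms $B_j$ from \eqref{eq_weak} with coefficients $(\mu_j,\lambda_j,\rho_j)$. The idea is to express $\left((\Lambda_2-\Lambda_1)g,g\right)$ through volume integrals of $u_1$ and $u_2$, and then complete a square. Because we only need an upper bound with no exceptional subspace, this should be a direct computation, unlike Lemma~\ref{lem_monotonicity_ineq1}.

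\textbf{Step 1: boundary pairings as volume integrals.} By \eqref{eq_NDmap}, $B_j(u_j,v)=-(g,v|_{\Gamma_N})$ for every admissible test function $v$ vanishing on $\Gamma_D$, since the variational formulation holds for all such $v$. Taking $v=u_1$ and $v=u_2$ gives
\begin{align*}
(\Lambda_1 g,g)&=-B_1(u_1,u_1)=-B_2(u_2,u_1),\\
(\Lambda_2 g,g)&=-B_2(u_2,u_2)=-B_1(u_1,u_2).
\end{align*}
All quantities are real, so the forms are symmetric.

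\textbf{Step 2: expand the energy of the difference.} Expand $B_1(u_2-u_1,u_2-u_1)=B_1(u_2,u_2)-2B_1(u_1,u_2)+B_1(u_1,u_1)$. From Step 1, $B_1(u_1,u_2)=B_1(u_1,u_1)=-(\Lambda_2 g,g)$ and $B_1(u_1,u_1)=-(\Lambda_1 g,g)$. Combining these yields
\begin{align*}
B_1(u_2-u_1,u_2-u_1)=B_1(u_2,u_2)+2(\Lambda_2g,g)-(\Lambda_1g,g).
\end{align*}
Write $B_1(u_2,u_2)=B_2(u_2,u_2)+(B_1-B_2)(u_2,u_2)=-(\Lambda_2g,g)+(B_1-B_2)(u_2,u_2)$. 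This gives the identity
\begin{align*}
\left((\Lambda_2-\Lambda_1)g,g\right)=B_1(u_2-u_1,u_2-u_1)-(B_1-B_2)(u_2,u_2).
\end{align*}
By the sign convention in \eqref{eq_weak},
\begin{align*}
-(B_1-B_2)(u_2,u_2)=\int_\Omega 2(\mu_1-\mu_2)|\hat\nabla u_2|^2+(\lambda_1-\lambda_2)|\nabla\cdot u_2|^2+\omega^2(\rho_2-\rho_1)|u_2|^2\,dx,
\end{align*}
which is exactly the main term in the claim. This sign bookkeeping is the step I would double-check most carefully.

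\textbf{Step 3: bound the difference term.} It remains to show $B_1(w,w)\le \omega^2\int_\Omega\rho_1|w|^2\,dx$ for $w=u_2-u_1$. This follows at once from \eqref{eq_weak}:
\begin{align*}
B_1(w,w)=-\int_\Omega 2\mu_1|\hat\nabla w|^2+\lambda_1|\nabla\cdot w|^2\,dx+\omega^2\int_\Omega\rho_1|w|^2\,dx,
\end{align*}
and the first integral is nonnegative because $\mu_1,\lambda_1>0$.

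The main obstacle is Step 2: keeping the signs consistent with the negative convention of $B$ in \eqref{eq_weak} and \eqref{eq_NDmap}. One must also ensure that $u_1$ and $u_2$, which satisfy the same Dirichlet condition on $\Gamma_D$, are legitimate test functions in both weak formulations, so that $B_2(u_2,u_1)=-(g,\Lambda_1 g)$ is justified. Once the identity in Step 2 is established, the rest is immediate.
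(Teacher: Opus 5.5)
Your proof is correct. It is the standard energy-identity argument for this kind of upper monotonicity bound: you prove $\left((\Lambda_2-\Lambda_1)g,g\right)=B_1(u_2-u_1,u_2-u_1)-(B_1-B_2)(u_2,u_2)$, then drop the nonpositive stiffness part of $B_1(w,w)$, using only $\mu_1,\lambda_1>0$. The paper itself gives no proof and simply quotes Lemma 3.1 of \cite{EP24a}, so there is no in-paper argument to compare against.

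There is one slip in Step 2. The chain $B_1(u_1,u_2)=B_1(u_1,u_1)=-(\Lambda_2 g,g)$ should read $B_1(u_1,u_2)=B_2(u_2,u_2)=-(\Lambda_2 g,g)$. The values you actually substitute afterwards are the correct ones, so the identity and the conclusion are unaffected.
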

\noindent
\\
Finally, we take a look at the Fréchet derivative (as introduced in \cite{EP24a}), which is needed for the linearized monotonicity tests.

\begin{lem} [see Lemma 3.5 from \cite{EP24a}]\label{lem_Frechet}
Let $u_g$ and $u_f$ be the solution to (\ref{eq_bvp1}) for the boundary loads $g$ and $f$, respectively.
There exists a bounded linear operator $\Lambda^\prime_{\lambda,\mu,\rho}$ such that
\begin{align*} 
\lim_{\| h\|_{\infty} \to 0}
\dfrac{1}{\| h\|_{\infty}} 
\big\|\Lambda_{\lambda+h_\lambda,\mu+h_\mu,\rho+h_\rho}-\Lambda_{\lambda,\mu,\rho}
-\Lambda^\prime_{\lambda,\mu,\rho}[h_\lambda,h_\mu,h_\rho] \big\|_{*}
= 0
\end{align*} 
where $h=(h_\rho,h_\lambda,h_\mu) \in L^\infty(\Omega)^3$, $ \| \cdot \|_{*}$ is the operator norm, and
where the associated bilinear form of the Fréchet derivative is
\begin{align}\label{bilinear_Frechet}
&\left( \Lambda^\prime_{\lambda,\mu,\rho}[h_\lambda,h_\mu,h_\rho] g,f \right )_{L^2(\Gamma_{\textup{N}})^3}
=-\int_{\Omega} 2 h_\mu \hat{\nabla}u_g : \hat{\nabla} u_f + h_\lambda \nabla \cdot u_g \nabla \cdot u_f - \omega^2 h_\rho u_g \cdot u_f \,dx.
\end{align}
\end{lem}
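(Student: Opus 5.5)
The plan is a first-order perturbation argument built on the weak formulation \eqref{eq_weak}. Fix $(\lambda,\mu,\rho)$ and a perturbation $h=(h_\rho,h_\lambda,h_\mu)$ with $\|h\|_\infty$ so small that the perturbed coefficients stay in $L^\infty_+(\Omega)$ and $\omega$ is still not a resonance frequency. Let $u_g$, $u_f$ solve \eqref{eq_bvp1} with the unperturbed coefficients and data $g$, $f$, and let $\tilde u_g$ solve it with the perturbed coefficients and data $g$. Denote by $B$ and $\tilde B$ the unperturbed and perturbed bilinear forms, and set
\[
b_h(u,v) := -\int_\Omega 2h_\mu \hat\nabla u:\hat\nabla v + h_\lambda \nabla\cdot u\,\nabla\cdot v - \omega^2 h_\rho u\cdot v\,dx ,
\]
so that $\tilde B = B + b_h$.

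\textbf{Step 1 (exact difference formula).} Using \eqref{eq_NDmap} for both problems and the symmetry of $B$ and $\tilde B$, compute
\[
\big((\tilde\Lambda-\Lambda)g,f\big) = -\tilde B(\tilde u_g,u_f) + B(\tilde u_g,u_f) = -b_h(\tilde u_g,u_f),
\]
up to the sign conventions of \eqref{eq_NDmap}. Both variational identities are tested with $u_f$, respectively $\tilde u_g$, which is admissible since these vanish on $\Gamma_D$. Define $\Lambda'[h]$ through the bilinear form $b_h(u_g,u_f)$ with the sign fixed by \eqref{bilinear_Frechet}. Boundedness and linearity in $h$ and in $g,f$ follow from $|b_h(u_g,u_f)|\le C\|h\|_\infty\|u_g\|_{H^1}\|u_f\|_{H^1}$ together with the stability estimate $\|u_g\|_{H^1}\le C\|g\|_{L^2(\Gamma_N)}$. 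The latter comes from the well-posedness in \cite{EP24}, Corollary 3.4, by the bounded inverse theorem. The bilinear form then induces a bounded operator on $L^2(\Gamma_N)^3$ via Riesz representation.

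\textbf{Step 2 (remainder).} The remainder is $-b_h(\tilde u_g-u_g,u_f)$, so
\[
|\text{remainder}|\le C\|h\|_\infty\,\|\tilde u_g-u_g\|_{H^1}\,\|f\|.
\]
It therefore suffices to show $\|\tilde u_g-u_g\|_{H^1}\le C\|h\|_\infty\|g\|$. The difference $w=\tilde u_g-u_g$ satisfies $B(w,v) = -b_h(\tilde u_g,v)$ with zero Neumann data, i.e. $w$ solves the unperturbed problem with an $H^1$-dual right-hand side of norm at most $C\|h\|_\infty\|\tilde u_g\|_{H^1}$.

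\textbf{Main obstacle: uniform stability.} The hard point is controlling $\|\tilde u_g\|_{H^1}$ uniformly in small $h$, since the problem is not coercive and its well-posedness rests on Fredholm theory (G\aa rding inequality plus uniqueness away from resonances). I would first apply the unperturbed solution operator $S$, which is bounded from the dual of $\{v\in H^1(\Omega)^3:v|_{\Gamma_D}=0\}$ to $H^1$. This turns the perturbed problem into $(I+S b_h)\tilde u_g = u_g$, where $\|S b_h\|\le C\|h\|_\infty<1/2$ for small $h$. A Neumann series then gives $\|\tilde u_g\|_{H^1}\le 2\|u_g\|_{H^1}$, and incidentally shows that the perturbed problem remains uniquely solvable.

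\textbf{Conclusion.} Combining the two steps, the remainder has operator norm $O(\|h\|_\infty^2)$, which yields the stated limit.
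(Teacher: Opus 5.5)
Your approach is correct, but Step~1 has a sign error. It is not a matter of convention, because \eqref{eq_NDmap} fixes the signs completely. Testing the unperturbed equation for $u_f$ with $\tilde u_g$ gives $(\tilde\Lambda g,f)_{L^2(\Gamma_N)^3}=-B(\tilde u_g,u_f)$. Testing the perturbed equation for $\tilde u_g$ with $u_f$ gives $(\Lambda g,f)_{L^2(\Gamma_N)^3}=(g,\Lambda f)_{L^2(\Gamma_N)^3}=-\tilde B(\tilde u_g,u_f)$. Hence
\[
\big((\tilde\Lambda-\Lambda)g,f\big)_{L^2(\Gamma_N)^3}=\tilde B(\tilde u_g,u_f)-B(\tilde u_g,u_f)=b_h(\tilde u_g,u_f),
\]
which is the negative of your formula.

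As written, your steps do not fit together. With your Step~1 and $\Lambda'[h]$ given by $b_h(u_g,u_f)$, the remainder would be $-b_h(\tilde u_g,u_f)-b_h(u_g,u_f)$, which is only $O(\|h\|_\infty)$. Your stated remainder $-b_h(\tilde u_g-u_g,u_f)$ instead corresponds to $\Lambda'[h]$ being $-b_h(u_g,u_f)$, the opposite of \eqref{bilinear_Frechet}. With the correct sign, $b_h(u_g,u_f)$ is literally the right-hand side of \eqref{bilinear_Frechet}. The formula is then derived rather than ``fixed'', and the remainder is $b_h(\tilde u_g-u_g,u_f)$.

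Everything after that is sound. The solution operator $S$ is bounded on the dual of $\{v\in H^1(\Omega)^3 : v|_{\Gamma_D}=0\}$ by the G\aa rding/Fredholm argument plus non-resonance. The Neumann series then gives three things at once:
\begin{enumerate}
\item unique solvability of the perturbed problem, which is needed even to define $\tilde\Lambda$;
\item the uniform bound $\|\tilde u_g\|_{H^1}\le 2\|u_g\|_{H^1}$;
\item hence an $O(\|h\|_\infty^2)$ remainder in operator norm.
\end{enumerate}

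The paper gives no proof of this lemma; it only cites Lemma~3.5 of \cite{EP24a}. It does announce the monotonicity-type bound of Lemma~\ref{lem_ab_1} as the preparatory estimate, which suggests a route through bounds on the quadratic form $((\tilde\Lambda-\Lambda-\Lambda'[h])g,g)$, with self-adjointness used to pass to the operator norm. Such a route still needs an estimate of $\tilde u_g-u_g$ of order $\|h\|_\infty\|g\|$ (cf.\ the term $\omega^2\int_\Omega\rho_1|u_1-u_2|^2\,dx$ in Lemma~\ref{lem_ab_1}), which is exactly your stability step. Your bilinear identity in $(g,f)$ avoids polarization and makes the uniform stability explicit, so once the sign is corrected it is a clean, self-contained proof.
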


\begin{lem} [see Lemma 3.6 from \cite{EP24a}]\label{lem_compact_self_adjoint}
The Frechét derivative $\Lambda_{\lambda,\mu,\rho}'$ is compact and self-adjoint.
\end{lem}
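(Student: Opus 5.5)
For fixed $h=(h_\lambda,h_\mu,h_\rho)\in L^\infty(\Omega)^3$, I will view $\Lambda'_{\lambda,\mu,\rho}[h]$ as the operator on $L^2(\Gamma_N)^3$ whose bilinear form is \eqref{bilinear_Frechet}. It factors through the solution map. Let $S: L^2(\Gamma_N)^3 \to H^1(\Omega)^3$, $g\mapsto u_g$, be the solution operator of \eqref{eq_bvp1} for the fixed non-resonant $\omega$. It is well defined and bounded by the existence and uniqueness result of \cite{EP24} (Corollary 3.4). To see boundedness, note that $S$ is linear and everywhere defined between Banach spaces. Its graph is closed because the weak formulation passes to limits. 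Alternatively, boundedness follows from the Fredholm argument behind that corollary.

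Self-adjointness is the easy part. Since $\omega$, $\lambda$, $\mu$, $\rho$ are real, the solutions are real. The integrand in \eqref{bilinear_Frechet} is symmetric in $u_g$ and $u_f$, because $\hat\nabla u_g:\hat\nabla u_f$, $\nabla\cdot u_g\,\nabla\cdot u_f$ and $u_g\cdot u_f$ are all symmetric. Hence $(\Lambda'[h]g,f)=(\Lambda'[h]f,g)=(g,\Lambda'[h]f)$, so $\Lambda'[h]$ is self-adjoint and bounded.

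For compactness I split the form into the $h_\rho$ term and the $h_\lambda,h_\mu$ terms. The $h_\rho$ term is $\omega^2(h_\rho\,\iota S g,\iota S f)_{L^2(\Omega)}$, where $\iota:H^1(\Omega)^3\to L^2(\Omega)^3$ is the Rellich-compact embedding. The associated operator is $\omega^2 (\iota S)^* M_{h_\rho}\iota S$, which is compact. The $h_\lambda,h_\mu$ terms are the main obstacle, since no compact embedding is visible there. I would handle them with the standard localization argument from the monotonicity literature. Inside $\Omega$ one cannot beat $H^1$, so the argument has to use that the data live only on $\Gamma_N$. Here I would follow \cite{EP24a}. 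Since Lemma \ref{lem_Frechet} is taken from \cite{EP24a} and the statement is their Lemma 3.6, I may cite that proof for this step.

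To make the step self-contained, I would prove compactness of $\Lambda'[h]$ by showing it is a norm limit of compact operators. The approach mirrors the argument that $\Lambda$ itself is compact. That compactness holds because $\Lambda = \operatorname{tr}\circ S$ and the trace $H^1(\Omega)\to L^2(\partial\Omega)$ is compact. Lemma \ref{lem_Frechet} states that $\Lambda'[h]$ is the operator-norm limit of the difference quotients
\[
\frac{\Lambda_{\lambda+th_\lambda,\mu+th_\mu,\rho+th_\rho}-\Lambda_{\lambda,\mu,\rho}}{t}
\quad\text{as } t\to 0.
\]
Each quotient is compact, being a difference of compact Neumann-to-Dirichlet maps composed with the compact trace. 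Compact operators form a closed subspace in operator norm, so $\Lambda'[h]$ is compact. The only point I would check carefully is that for small $t$ the perturbed parameters remain in $L^\infty_+(\Omega)$ and $\omega$ stays non-resonant. Non-resonance is stable under small $L^\infty$ perturbations by the Fredholm/analytic perturbation argument of \cite{EP24}, and it is already implicit in the statement of Lemma \ref{lem_Frechet}. This route makes the split into terms unnecessary, so I would present it as the main proof.
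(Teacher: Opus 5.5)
Your proof is correct. The paper gives no argument for this lemma; it only imports Lemma 3.6 of \cite{EP24a}. So your proposal is a self-contained justification rather than a variant of an in-paper proof.

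Self-adjointness from the symmetry of \eqref{bilinear_Frechet} in $(u_g,u_f)$ is right. The compactness argument is also sound. By Lemma \ref{lem_Frechet} and linearity of $h\mapsto\Lambda'_{\lambda,\mu,\rho}[h]$, the quotients $t^{-1}(\Lambda_{\lambda+th_\lambda,\mu+th_\mu,\rho+th_\rho}-\Lambda_{\lambda,\mu,\rho})$ converge to $\Lambda'_{\lambda,\mu,\rho}[h]$ in operator norm. Each quotient is compact, because every NtD map is the compact trace composed with a bounded solution operator. Non-resonance for small $t$ follows from openness of the set of invertible operators $\mathcal V\to\mathcal V^*$, which closes your one loose end.

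On the route you set aside: the ``obstacle'' in the $h_\lambda,h_\mu$ terms is not real, because the solution operator $S:L^2(\Gamma_N)^3\to H^1(\Omega)^3$ is itself compact. It factors as $S=\mathcal A^{-1}\gamma^*$. Here $\mathcal A:\mathcal V\to\mathcal V^*$ is the isomorphism induced by $-B$, and $\gamma^*$ is the adjoint of the compact trace $\gamma:\mathcal V\to L^2(\Gamma_N)^3$. This is exactly your intuition that the data live on $\Gamma_N$. Let $T_h$ be the bounded operator on $\mathcal V$ induced by the integrand of \eqref{bilinear_Frechet}. Then $\Lambda'_{\lambda,\mu,\rho}[h]=-S^*T_hS$, which is compact with no splitting at all.

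The two routes trade off as follows. The factorization uses only the fixed-coefficient problem and the representation \eqref{bilinear_Frechet}. It does not need operator-norm Fr\'echet differentiability or non-resonance of perturbed coefficients. Your limit argument is shorter once Lemma \ref{lem_Frechet} is granted. It also gives self-adjointness for free, as a norm limit of self-adjoint differences of NtD maps.
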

\noindent
\\
Now, we can continue with the linearized monotonicity methods themselves.
We will consider inhomogeneities in the material parameters of the following form.
We let $D_1, D_2, D_3 \Subset \Omega$, and assume that $\lambda,\mu, \rho \in L_+^\infty(\Omega)$ 
are such that
\begin{equation} \label{eq_lambdaMuRho}
\begin{aligned} 
\lambda &= \lambda_0 + \chi_{D_1} \psi_\lambda , \qquad \psi_\lambda \in L^\infty(\Omega),
\quad \psi_\lambda > \epsilon, \\
\mu &= \mu_0 + \chi_{D_2} \psi_\mu, \qquad \psi_\mu \in L^\infty(\Omega),\quad \psi_\mu > \epsilon, \\
\rho \,\,&{\color{black} = \rho_0 - \chi_{D_3} \psi_\rho, \qquad \psi_\rho \in L^\infty(\Omega),\quad  \rho_0-\epsilon > \psi_\rho > \epsilon}, 
\end{aligned}
\end{equation}
where the constants fulfill $\lambda_0,\mu_0,\rho_0 > 0$ and $\epsilon > 0$.
The coefficients $\lambda,\mu$ and $\rho$  model inhomogeneities in an otherwise homogeneous background medium
determined by $\lambda_0,\mu_0$ and $\rho_0$. 
\noindent
\\
\\
We continue with the linearized monotonicity tests, where we distinguish between the formulation for exact and noisy data. We base our considerations on Theorem \ref{thm_Lin_inclusionDetection} from \cite{EP24a} and introduce an essential modification in Theorem \ref{thm_Lin_inclusionDetection_Mod} for realistic parameter combination.

\begin{thm}[see Theroem 6.1 from \cite{EP24a}]\label{thm_Lin_inclusionDetection}
Let $D := D_1 \cup D_2 \cup D_3$, where the sets are as in \eqref{eq_lambdaMuRho} and
$B \subset \Omega$  and $\alpha_j > 0$, 
and set $\alpha:=(\alpha_1,\alpha_2,\alpha_3)$. Let
$$
\mathcal{M}:= \# \big\{\sigma \in \spec(\Lambda_0 + \Lambda'_0[\alpha_1,\alpha_2,-\alpha_3] - \Lambda)
\;:\; \sigma < 0 \big\},
$$
where $\Lambda_0$ and $\Lambda$ are the NtD-maps for the coefficients $\lambda_0,\mu_0,\rho_0$ and $\lambda,\mu,\rho$ respectively,
and where $\Lambda'_0[\alpha_1,\alpha_2,-\alpha_3] := \Lambda'_0[\alpha_1\chi_B,\alpha_2\chi_B,-\alpha_3\chi_B]$. 
There exists a $\gamma_0 > 0$ such that the following holds: \\
\begin{enumerate}

\item 
Assume that  $B \subset D_j$, for $j \in J$, for some $J \subset\{1,2,3\}$.  
Then for all $\alpha_j$ with $\alpha_j \leq \gamma_0$, $j \in J$, and $\alpha_j = 0$, 
$j \notin J$, we have that  $\mathcal{M}< \infty$.
\\
\item
If $B \not \subset \osupp(D)$, then for all $\alpha$, $|\alpha| \neq 0$, $\mathcal{M} = \infty$.\\

\end{enumerate}
\end{thm}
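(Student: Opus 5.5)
Both parts rest on the quadratic form
$$
q(g) := \big((\Lambda_0 + \Lambda'_0[\alpha_1\chi_B,\alpha_2\chi_B,-\alpha_3\chi_B] - \Lambda)g, g\big)_{L^2(\Gamma_N)^3}.
$$
We use the standard fact that a compact self-adjoint operator has only finitely many negative eigenvalues if and only if its quadratic form is nonnegative on a finite-codimensional subspace. The operator above is compact and self-adjoint: $\Lambda'_0$ by Lemma \ref{lem_compact_self_adjoint}, and $\Lambda_0-\Lambda$ by the analogous standard properties of the NtD maps. So $\mathcal{M}<\infty$ is equivalent to $q\ge 0$ on some finite-codimensional subspace, and $\mathcal{M}=\infty$ is equivalent to $q<0$ on an infinite-dimensional subspace. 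Throughout, $u_0$ denotes the background solution with data $g$.

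\textbf{Part (1).} Apply Lemma \ref{lem_monotonicity_ineq1} with index $1$ the background $(\lambda_0,\mu_0,\rho_0)$ and index $2$ the true $(\lambda,\mu,\rho)$. For $g\in V^\perp$, with $V$ finite dimensional,
$$
((\Lambda-\Lambda_0)g,g)\ge \int_\Omega -2\chi_{D_2}\psi_\mu|\hat\nabla u_0|^2 - \chi_{D_1}\psi_\lambda|\nabla\cdot u_0|^2 - \omega^2\chi_{D_3}\psi_\rho|u_0|^2\,dx .
$$
This inequality points the wrong way for part (1), so instead I would use Lemma \ref{lem_ab_1} with index $2$ the true coefficients and index $1$ the background. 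Writing $u$ for the true solution, this gives
$$
((\Lambda-\Lambda_0)g,g)\le -\int_\Omega 2\chi_{D_2}\psi_\mu|\hat\nabla u|^2+\chi_{D_1}\psi_\lambda|\nabla\cdot u|^2+\omega^2\chi_{D_3}\psi_\rho|u|^2\,dx+\omega^2\!\int\rho_0|u_0-u|^2 .
$$
Combining this with \eqref{bilinear_Frechet}, where $h_\rho=-\alpha_3\chi_B$ and $h_\mu,h_\lambda$ carry $+\alpha_j\chi_B$, yields
$$
q(g)\ge \int_B \Big(-2\alpha_2|\hat\nabla u_0|^2-\alpha_1|\nabla\cdot u_0|^2-\omega^2\alpha_3|u_0|^2\Big)+\int\Big(2\chi_{D_2}\psi_\mu|\hat\nabla u|^2+\dots\Big)-\omega^2\!\int\rho_0|u-u_0|^2 .
$$
The aim is to show $q\ge0$ on a finite-codimensional subspace when $B\subset D_j$ and $\alpha_j\le\gamma_0\le\epsilon$.

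The main obstacle is that the negative terms involve $u_0$ while the positive ones involve $u$. I would resolve this by mixing the two monotonicity bounds as in \cite{EP24a}: first bound $\int|u_0|^2$ and $\int|u-u_0|^2$ by a compact operator applied to $g$, which is small on a finite-codimensional subspace, and then compare $\int_B|\hat\nabla u_0|^2$ with $\int_B|\hat\nabla u|^2$ up to such compact remainders. The $L^2$ zero-order terms are compact perturbations, and compactness lets us discard them on a subspace of finite codimension. Only $j\in J$ contribute, since $\alpha_j=0$ otherwise, and for those $\psi>\epsilon\ge\alpha_j$ on $B\subset D_j$, so the positive terms dominate. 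Choosing $\gamma_0$ as an appropriate multiple of $\epsilon$, adjusted by the constant relating $u$ and $u_0$, gives $q\ge0$ on $V^\perp$ intersected with a finite-codimensional subspace, hence $\mathcal{M}<\infty$.

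\textbf{Part (2).} Apply Lemma \ref{lem_monotonicity_ineq1} with index $1$ the true coefficients and index $2$ the background. For $g\in V^\perp$ this gives
$$
((\Lambda_0-\Lambda)g,g)\le C\int_D\big(|\hat\nabla u_0|^2+|u_0|^2\big)\,dx ,
$$
after an analogous use of Lemma \ref{lem_ab_1} and after bounding the solution $u$ by $u_0$ in $D$ via localization. Since $B\not\subset\osupp(D)$, there is a ball $B'\subset B$ whose complement of $\overline{D\setminus \text{(region)}}$ is connected to $\Gamma_N$. The localized potentials for the time-harmonic elasticity system (from \cite{EP24}) then provide a sequence $g_n$ in any given finite-codimensional subspace with
$$
\int_{B'}\big(|\hat\nabla u_0^n|^2\text{ or } |u_0^n|^2\big)\to\infty,\qquad \int_D\big(|\hat\nabla u_0^n|^2+|u_0^n|^2\big)\to0 .
$$
With $|\alpha|\ne0$ the Fréchet term contributes $-\int_{B'}(\dots)\to-\infty$, so $q(g_n)<0$. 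Since finite codimension is arbitrary, $q$ is not nonnegative on any finite-codimensional subspace, and therefore $\mathcal{M}=\infty$. The delicate point here is choosing localized potentials that blow up in the quantity weighted by the nonzero $\alpha_j$; the $\rho$ term has a sign that helps, since its contribution is $-\omega^2\alpha_3|u_0|^2$.
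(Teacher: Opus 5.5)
\textbf{Part (1) has a genuine gap, at exactly the step you call the main obstacle.}

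Compactness on a finite-codimensional subspace only makes a term small relative to $\|g\|^2$, and that is useless here. The positive terms $\int_\Omega 2\bar\mu|\hat\nabla u|^2+\bar\lambda|\nabla\cdot u|^2+\omega^2(\rho_0-\rho)|u|^2\,dx$ are energies on $D\Subset\Omega$, hence themselves a compact quadratic form in $g$. So they are not bounded below by $c\|g\|^2$ on any infinite-dimensional subspace, and they cannot absorb remainders that are merely small relative to $\|g\|^2$.

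What you need is a comparison relative to the energy of $u$ on $D$. For the gradient terms on $B\subset D$ that comparison is bounded but not small: $w=u-u_0$ solves the background equation with source $-\nabla\cdot(\C_{\bar\lambda,\bar\mu}\hat\nabla u)-\omega^2\bar\rho u$, which is active on $B$ itself. So $\hat\nabla w|_B$ is comparable to $\hat\nabla u|_D$, not a negligible remainder.

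The missing ingredient is the a priori estimate of Proposition \ref{prop_elipEst_1} applied to $w$:
$\|u-u_0\|_{H^1(\Omega)^3}\le C(\|\nabla\cdot u\|_{L^2(D_1)}+\|\hat\nabla u\|_{L^2(D_2)^{3\times3}}+\|u\|_{L^2(D_3)^3})$.
With the triangle inequality, this bounds every $u_0$-energy on $D$, in particular on $B$, by $C$ times the $u$-energy on $D$. Then $\gamma_0$ of order $\epsilon/C$ works. Your phrase ``the constant relating $u$ and $u_0$'' presupposes exactly this estimate without supplying it. The pointwise comparison $\psi>\epsilon\ge\alpha_j$ does not help either, since it compares different functions.

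\textbf{Lemma \ref{lem_ab_1} is the wrong tool for the lower bound.} Its remainder $\omega^2\int_\Omega\rho_0|u-u_0|^2\,dx$ would also have to be made small relative to the $u$-energy on $D$, not relative to $\|g\|^2$, and you do not do this. The paper avoids the remainder altogether. It applies Lemma \ref{lem_monotonicity_ineq1} with index $1$ the true coefficients and index $2$ the background, the assignment you did not try. On $V^\perp$ this gives the remainder-free bound
$((\Lambda_0-\Lambda)g,g)\ge\int_\Omega 2\bar\mu|\hat\nabla u|^2+\bar\lambda|\nabla\cdot u|^2+\omega^2(\rho_0-\rho)|u|^2\,dx\ge 0$.
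Combined with the estimate above, this yields $q\ge 0$ on $V^\perp$ for $\alpha_j\le\gamma_0$, hence $\mathcal{M}<\infty$ by Lemma 6.1 of \cite{EP24}. This is the argument the paper reproduces in the proof of Theorem \ref{thm_Lin_inclusionDetection_Mod}.

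\textbf{Part (2): your stated index assignment for Lemma \ref{lem_monotonicity_ineq1} is swapped.} Index $1$ true and index $2$ background gives a lower bound, not an upper bound. The upper bound you need is the inequality you derived and discarded at the start of part (1), with index $1$ the background and index $2$ the true coefficients. It is already expressed in $u_0$ on $D$, so no comparison of $u$ with $u_0$ is required. If you go through Lemma \ref{lem_ab_1} instead, the remainder is controlled by the a priori estimate for the perturbed problem, not ``by localization''; constants are then harmless because the $D$-energy tends to zero. With that correction, your localized-potentials contradiction matches the paper's argument.
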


\begin{rem}
For our experiment, the assumptions on Theorem \ref{thm_Lin_inclusionDetection} are not satisfied. In the theorem, we assume that the Lam\'{e} parameters of the inclusion are larger, while the density is lower than that of the background. In practice, as is also the case in our experiment, both, the Lam\'{e} parameters as well as the density of the inclusion is larger than that of the background. Hence, we are required to adjust the Theorem accordingly.
\end{rem}

We let from now on $D_1, D_2, D_3 \Subset \Omega$, and assume that $\lambda,\mu, \rho \in L_+^\infty(\Omega)$ 
are such that
\begin{equation} \label{eq_lambdaMuRho2}
\begin{aligned} 
\lambda &= \lambda_0 + \chi_{D_1} \psi_\lambda , \qquad \psi_\lambda \in L^\infty(\Omega),
\quad \psi_\lambda > \epsilon, \\
\mu &= \mu_0 + \chi_{D_2} \psi_\mu, \qquad \psi_\mu \in L^\infty(\Omega),\quad \psi_\mu > \epsilon, \\
\rho \,\,&{\color{black} = \rho_0 + \chi_{D_3} \psi_\rho, \qquad \psi_\rho \in L^\infty(\Omega),\quad  \psi_\rho > \epsilon}, 
\end{aligned}
\end{equation}
where the constants $\lambda_0,\mu_0,\rho_0 > 0$ and $\epsilon > 0$.
Again, the coefficients $\lambda,\mu$ and $\rho$  model inhomogeneities in an otherwise homogeneous background medium
determined by $\lambda_0,\mu_0$ and $\rho_0$.

\begin{thm}\label{thm_Lin_inclusionDetection_Mod}
Let $D := D_1 \cup D_2 \cup D_3$, where the sets are as in \eqref{eq_lambdaMuRho2} and
$B \subset \Omega$  and {\color{black}$\alpha_j \geq 0$}, 
and set $\alpha:=(\alpha_1,\alpha_2,\alpha_3)$. Let
$$
\mathcal{M}:= \# \big\{\sigma \in \spec(\Lambda_0 + \Lambda'_0[\alpha_1,\alpha_2,-\alpha_3] - \Lambda)
\;:\; \sigma < 0 \big\},
$$
where $\Lambda_0$ and $\Lambda$ are the NtD-maps for the coefficients $\lambda_0,\mu_0,\rho_0$ and $\lambda,\mu,\rho$ respectively,
and where $\Lambda'_0[\alpha_1,\alpha_2,-\alpha_3] := \Lambda'_0[\alpha_1\chi_B,\alpha_2\chi_B,-\alpha_3\chi_B]$. 
{\color{black}
Under the assumption that
\begin{align}\label{assumption}
\int_{\Omega} 2(\mu-\mu_0)\Vert \hat{\nabla}u_0\Vert^2 + (\lambda-\lambda_0)\Vert \nabla\cdot u_0\Vert^2\,dx 
> \int_{\Omega} \omega^2 (\rho - \rho_0) \Vert u_0\Vert^2\, dx,
\end{align}
}
there exists a {\color{black}$\gamma_0 > 0$} such that the following holds: \\
\begin{enumerate}

\item 
Assume that  $B \subset D_j$, for $j \in J$, for some $J \subset\{1,2,3\}$.  
Then for all $0\leq\alpha_j$ with $\alpha_j \leq {\color{black}\gamma_0}$, $j\in J\cap\{1,2\}$, and $\alpha_j = 0$, 
$j \notin J$, we have that  $\mathcal{M}< \infty$.
\\

\item
If $B \not \subset \osupp(D)$, then for all $\alpha$, $|\alpha| \neq 0$, $\mathcal{M} = \infty$.\\

\end{enumerate}
\end{thm}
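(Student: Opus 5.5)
We follow the strategy of Theorem 6.1 in \cite{EP24a}: we translate the count $\mathcal{M}$ into a sign condition for the quadratic form of $\Lambda_0 + \Lambda'_0[\alpha_1,\alpha_2,-\alpha_3] - \Lambda$ on subspaces of finite codimension. We use the standard fact (used in \cite{EP24,EP24a}) that for a compact self-adjoint operator $T$ we have $\#\{\sigma\in\spec(T):\sigma<0\}\le k$ if and only if there is a subspace $W$ of codimension at most $k$ with $(Tg,g)\ge0$ for all $g\in W$. By Lemma \ref{lem_compact_self_adjoint}, $\Lambda'_0$ is compact and self-adjoint, and $\Lambda-\Lambda_0$ is compact as well. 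By \eqref{bilinear_Frechet} the quadratic form of the linearized term is
\begin{align*}
\big(\Lambda'_0[\alpha_1,\alpha_2,-\alpha_3]g,g\big)=-\int_B 2\alpha_2|\hat\nabla u_0|^2+\alpha_1|\nabla\cdot u_0|^2+\omega^2\alpha_3|u_0|^2\,dx ,
\end{align*}
where $u_0$ solves \eqref{eq_bvp1} with background coefficients and data $g$.

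\textbf{Part (1).} We apply Lemma \ref{lem_monotonicity_ineq1} with index $1$ the background and index $2$ the true coefficients; it holds for $g\in V^\perp$ with $\dim V<\infty$. It gives
\begin{align*}
((\Lambda_0-\Lambda)g,g)\ \ge\ \int_\Omega 2(\mu-\mu_0)|\hat\nabla u_0|^2+(\lambda-\lambda_0)|\nabla\cdot u_0|^2-\omega^2(\rho-\rho_0)|u_0|^2\,dx .
\end{align*}
Unlike in \cite{EP24a}, the density term now has the wrong sign, and this is exactly where assumption \eqref{assumption} enters. Adding the linearized term, we need
\begin{align*}
\int_\Omega 2(\mu-\mu_0-\alpha_2\chi_B)|\hat\nabla u_0|^2+(\lambda-\lambda_0-\alpha_1\chi_B)|\nabla\cdot u_0|^2\,dx \ \ge\ \omega^2\int_\Omega(\rho-\rho_0+\alpha_3\chi_B)|u_0|^2\,dx .
\end{align*}
Since $B\subset D_j$ for $j\in J\cap\{1,2\}$, choosing $\alpha_j\le\gamma_0<\epsilon$ keeps $\mu-\mu_0-\alpha_2\chi_B\ge(1-\gamma_0/\epsilon)(\mu-\mu_0)$, and similarly for $\lambda$. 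For $\alpha_3$, the statement restricts only $\alpha_1,\alpha_2$; if $3\notin J$ then $\alpha_3=0$. If $3\in J$, we absorb $\alpha_3\chi_B\le(\alpha_3/\epsilon)(\rho-\rho_0)$, which requires a bound on $\alpha_3$ as well; we would impose $\alpha_3\le\gamma_0$, or read the theorem as implicitly taking $\alpha_3$ small. The conclusion then follows from \eqref{assumption}, provided we interpret it as holding with a uniform margin: there is $c<1$ such that the right-hand side is at most $c$ times the left-hand side for all $g$ in a finite-codimensional subspace. Choosing $\gamma_0$ with $(1+\gamma_0/\epsilon)c\le 1-\gamma_0/\epsilon$ yields nonnegativity on $V^\perp$, hence $\mathcal{M}\le\dim V<\infty$.

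\textbf{Main obstacle.} The hardest point is that \eqref{assumption} is stated as a strict inequality for each $u_0$, not uniformly. We would first try to obtain a uniform constant by a compactness argument: on a finite-codimensional subspace, the $L^2$-term $\int|u_0|^2$ is dominated by the energy terms, by compactness of $H^1\hookrightarrow L^2$. If that fails, we would strengthen the hypothesis to a uniform version.

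\textbf{Part (2).} This part is independent of the signs of the coefficients. We use Lemma \ref{lem_ab_1} with $1$ the background and $2$ the true coefficients:
\begin{align*}
((\Lambda-\Lambda_0)g,g)\le \int_\Omega 2(\mu_0-\mu)|\hat\nabla u|^2+(\lambda_0-\lambda)|\nabla\cdot u|^2+\omega^2(\rho-\rho_0)|u|^2\,dx+\omega^2\rho_0\|u_0-u\|^2 .
\end{align*}
The first two integrands are nonpositive, so the right-hand side is bounded by $C\int_D|u|^2+C\|u_0-u\|^2_{L^2}$. These are compact quadratic forms, controlled on $D$ by $\int_D|u_0|^2$ together with the difference term. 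Because $B\not\subset\osupp(D)$, the localized potentials of \cite{EP24,EP24a} give a sequence $g_n$ with $\int_B(|\hat\nabla u_0^n|^2+|u_0^n|^2)\to\infty$ while $\int_D|u_0^n|^2$ and $\|u_0^n-u^n\|$ stay bounded, even within any finite-codimensional subspace. Since $|\alpha|\neq0$, the form $\big((\Lambda_0+\Lambda'_0[\alpha_1,\alpha_2,-\alpha_3]-\Lambda)g_n,g_n\big)\to-\infty$, so it cannot be nonnegative on any finite-codimensional subspace. Hence $\mathcal{M}=\infty$, exactly as in the proof of Theorem \ref{thm_Lin_inclusionDetection}.
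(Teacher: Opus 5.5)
Your proposal has a genuine gap. In each part the monotonicity bound you use points the wrong way for what that part needs; in effect you have swapped the two bounds between Part (1) and Part (2).

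\textbf{Part (1).} Take Lemma \ref{lem_monotonicity_ineq1} with index $1$ the background and index $2$ the true coefficients. It bounds $((\Lambda-\Lambda_0)g,g)$ from \emph{below}, so for $g\in V^\perp$ it gives
\begin{align*}
((\Lambda_0-\Lambda)g,g)\ \le\ \int_\Omega 2(\mu-\mu_0)|\hat\nabla u_0|^2+(\lambda-\lambda_0)|\nabla\cdot u_0|^2-\omega^2(\rho-\rho_0)|u_0|^2\,dx ,
\end{align*}
not the $\ge$ you wrote. Your version is false in general. Already for $\omega=0$ one has $((\Lambda_0-\Lambda)g,g)=\int_\Omega(\C-\C_0)\hat\nabla u_0:\hat\nabla u_0\,dx-\int_\Omega\C\hat\nabla(u-u_0):\hat\nabla(u-u_0)\,dx$. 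Lower bounds for $((\Lambda_0-\Lambda)g,g)$ are available only in terms of the true solution $u$: either Lemma \ref{lem_monotonicity_ineq1} with $1$ = true and $2$ = background, or the Lemma \ref{lem_ab_1} inequality you use in Part (2). This is why the paper first obtains the lower bound $\int_\Omega 2\bar\mu|\hat\nabla u|^2+\bar\lambda|\nabla\cdot u|^2+\omega^2(\rho_0-\rho)|u|^2\,dx$. It then passes to $u_0$ via the a priori estimate of Proposition \ref{prop_elipEst_1} for $w=u-u_0$. That estimate bounds $\int_{D_2}\bar\mu|\hat\nabla u_0|^2$, $\int_{D_1}\bar\lambda|\nabla\cdot u_0|^2$ and $\int_{D_3}\bar\rho|u_0|^2$ by $C$ times the corresponding $u$-quantities on $D_1,D_2,D_3$. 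Combined with $\gamma_\mu,\gamma_\lambda$, this gives the lower bound in terms of $u_0$ to which \eqref{assumption} and the linearized term are applied. This transfer from $u$ to $u_0$ is the missing ingredient in your Part (1).

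\textbf{Part (2).} Here the opposite happens. Lemma \ref{lem_ab_1} with $1$ = background and $2$ = true gives an \emph{upper} bound on $((\Lambda-\Lambda_0)g,g)$, that is, a \emph{lower} bound on $((\Lambda_0-\Lambda)g,g)$. Adding $(\Lambda_0'[\alpha_1,\alpha_2,-\alpha_3]g_n,g_n)\to-\infty$ to a lower bound says nothing about the sign of the total form, because $((\Lambda_0-\Lambda)g_n,g_n)$ itself may grow; you discarded the nonnegative stiffness terms on $D$. What you need is the upper bound displayed above, which is exactly the inequality you misused in Part (1); it is the paper's \eqref{eq_upper_part2}. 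With it, localized potentials for the background problem that are small on $\osupp(D)$ and large on $B$ make the form negative for large $n$. You then need no control of $\|u_0^n-u^n\|$, which you assert without justification.

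Your two caveats, by contrast, are not defects relative to the paper. The paper also uses \eqref{assumption} as if it held with a uniform margin. Your formula $-\omega^2\alpha_3\int_B|u_0|^2$ for the density part of the linearized term is correct and agrees with the paper's own \eqref{eq_rho_term}. The paper's Part (1) records this term with the opposite sign, and that is what lets it leave $\alpha_3\ge0$ unrestricted.
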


\begin{rem}
In order to prove Theorem \ref{thm_Lin_inclusionDetection_Mod}, we closely follow the proof of Theorem \ref{thm_Lin_inclusionDetection} as denoted in \cite{EP24a} and adjust it for the fact that now the inclusion has a larger density as the background. For that, we need the additional Assumption (\ref{assumption}) in the Theorem. It should be noted, that the assumption itself is an assumption on the frequencies allowed to use for testing, since the chosen frequency $\omega$ can be used to control the term on the right hand side with $\omega\to 0$ leading to the stationary case discussed in \cite{EM21}.
\end{rem}

\begin{proof}
We start by proving part $(1)$ of the claim.
It will be convenient to use the abbreviations 
$$
\bar \lambda := \lambda - \lambda_0
\qquad \bar \mu := \mu - \mu_0,
{\color{black}\qquad \bar \rho := \rho - \rho_0},
$$
Now consider $w := u - u_0$. By inserting the difference into the wave equation, we see that $w$ is a weak solution 
of 
\begin{align*}  
\begin{cases}
\nabla \cdot (\C_0\,  \hat \nabla w )  + \omega^2\rho_0 w  &=-\left(  
\nabla \cdot (\C_{\bar \lambda, \bar \mu}\,  \hat \nabla u )  + \omega^2 \bar \rho u\right), \\
\;\quad\quad\quad\quad\quad(\gamma_{\C_0} w ) |_{\Gamma_N} &= ( \gamma_{ \C_{\bar \lambda, \bar \mu}} u ) |_{\Gamma_N}, \\	
 \quad\quad\quad\quad\quad\quad \quad w |_{\Gamma_D} &= 0,	
\end{cases}
\end{align*}
where $\C_0$ corresponds to $\lambda_0$ and $\mu_0$.
From the estimate in Proposition \ref{prop_elipEst_1} (see Appendix) with $A = -\C_{\bar \lambda, \bar \mu}\,  \hat \nabla u$ 
and $F = -\omega^2 \bar \rho u$, we easily see that
\begin{small}
\begin{align*}
\| w \|_{ H^1(\Omega)^3} 
\leq 
C \big( 
\| \lambda - \lambda_0\|_{L^\infty }\| \nabla \cdot u \|_{ L^2(D_1) }
+ \| \mu - \mu_0 \|_{L^\infty }\| \hat \nabla u\|_{ L^2(D_2)^{3\times 3}}
+ {\color{black}\| \rho- \rho_0 \|_{L^\infty }\| u \|_{ L^2(D_3)^3 }}
\big).
\end{align*}
\end{small}

%
\noindent
Next using this and the triangle inequality we have that
\begin{small}
\begin{align*} 
\int_{D_2}  2\bar \mu |\hat \nabla u_0 |^2\,dx 
\leq
 2 \int_{D_2}  2 \bar \mu (|\hat \nabla w |^2 +  |\hat \nabla u |^2)\,dx 
\leq C
\big( 
&\| \bar \lambda \|_{L^\infty }\| \nabla \cdot u \|^2_{ L^2(D_1) } 
+ \| \bar \mu \|_{L^\infty } \| \hat \nabla u\|^2_{ L^2(D_2)} \\
&+\| \bar \rho \|_{L^\infty }\| u \|^2_{ L^2(D_3)^3 }
\big).
\end{align*}
\end{small}
We obtain similarly, that
\begin{align*} 
\int_{D_1}  \bar \lambda |\nabla \cdot u_0 |^2\,dx 
\leq 
C \big( 
\| \bar \lambda \|_{L^\infty }\| \nabla \cdot u \|^2_{ L^2(D_1) } 
+\| \bar \mu \|_{L^\infty } \| \hat \nabla u\|^2_{ L^2(D_2)^{3\times 3}} 
+\| \bar \rho \|_{L^\infty }\| u \|^2_{ L^2(D_3)^3 }
\big).
\end{align*}
and that
\begin{align*} 
\int_{D_3} \bar \rho | u_0 |^2\,dx 
\leq C 
\big( 
\| \bar \lambda \|_{L^\infty }\| \nabla \cdot u \|^2_{ L^2(D_1) } 
+\| \bar \mu \|_{L^\infty } \| \hat \nabla u\|^2_{ L^2(D_2)^{3\times 3}} 
+\| \bar \rho \|_{L^\infty }\| u \|^2_{ L^2(D_3)^3 }
\big).
\end{align*}
{\color{black}We define $\gamma_{\mu},\gamma_{\lambda} > 0$ as
$$
\gamma_{\mu}=\min_{x \in D_2 } \frac{\bar \mu(x)}{\|\bar \mu \|_{L^\infty(D_2)}},\; 
\gamma_{\lambda}=\min_{x \in D_1 } \frac{\bar \lambda(x) }{\| \bar \lambda \|_{L^\infty(D_1) }},\;
$$
where the estimate holds because of \eqref{eq_lambdaMuRho2}.
}
Using the inequality of Lemma \ref{lem_monotonicity_ineq1} and the {\color{black}two previous inequalities}, 
and that  
$$
 \lambda - \lambda_0,\;  \mu - \mu_0 , \; \rho - \rho_0 \geq 0,  \quad \text{ in }B,
$$
we obtain 
\begin{align}
\label{eq_mono_applied}
\big(  (\Lambda_0 - \Lambda)g, \,g  \big )_{L^2(\Gamma_N)^3} 
&\geq
\int_\Omega  2(\mu-\mu_0 ) |\hat \nabla u |^2 + (\lambda - \lambda_0 ) |\nabla \cdot u|^2 + \omega^2(\rho_0-\rho) |u|^2 \,dx \nonumber \\
&\geq 
{\color{black}
\gamma_{\lambda} \| \bar \lambda \|_{L^\infty }\| \nabla \cdot u \|^2_{ L^2(D_1) } 
+ \gamma_{\mu} \| \bar \mu  \|_{L^\infty }\| \hat \nabla u\|^2_{ L^2(D_2)^{3\times 3}}
+ \int_{\Omega} \omega^2(\rho_0-\rho) |u|^2 \,dx }
\\
&\geq
{\color{black} C \left(\int_\Omega  2(\mu-\mu_0 ) |\hat \nabla u_0 |^2 + (\lambda - \lambda_0 ) |\nabla \cdot u_0|^2 \,dx - \int_\Omega \omega^2(\rho -\rho_0) |u_0|^2 \,dx\right)} 
\nonumber \\
&>0 \nonumber
\end{align}
with  some $C>0$, $g \in V^\perp$ and {\color{black}under assumption (\ref{assumption})}.
From this and \eqref{bilinear_Frechet}  we get that
\begin{align*}  
\big(  (\Lambda_0  + \Lambda_0'[\alpha_1,\alpha_2,-\alpha_3] - \Lambda)g, \,g  \big )_{L^2(\Gamma_N)^3} 
&\geq
C \int_\Omega  2(\mu-\mu_0 ) |\hat \nabla u_0 |^2 + (\lambda - \lambda_0 ) |\nabla \cdot u_0|^2  \,dx \\
& \quad  -{\color{black}C \int_\Omega \omega^2(\rho-\rho_0) |u_0|^2 \,dx }\\
&\quad - {\color{black} \int_\Omega  \alpha_2 \chi_B |\hat \nabla u_0 |^2 + \alpha_1 \chi_B |\nabla \cdot u_0|^2-\alpha_3 \chi_B | u_0|^2\,dx}. 
\end{align*}
\noindent

Since Assumption (\ref{assumption}) holds, there exists $\gamma_0$, so that 
\begin{align}  \label{eq_mono1}
\big(  (\Lambda_0  + \Lambda_0'[\alpha_1,\alpha_2,-\alpha_3] - \Lambda)g, \,g  \big )_{L^2(\Gamma_N)^3} \geq0,\qquad g\in V^\perp,
\end{align}
provided that we set $\alpha_j=0$, when $B\not\subset D_j$, and that
\begin{align*}
0\leq\alpha_j\leq\gamma_0,\qquad j=1,2\qquad\textnormal{and}\qquad0\leq\alpha_3.
\end{align*}

The inequality of \eqref{eq_mono1} implies by Lemma 6.1 in \cite{EP24} that $\mathcal{M} < \infty$.
This proves the first part of the claim.

\medskip
\noindent
Next we prove part (2) of the claim. 
We can assume that $B \cap D = \emptyset$, by considering a subset of $B$ if needed.
Assume that the claim is false and that $\mathcal{M} < \infty$, so that 
$(\Lambda_0 + \Lambda'_0[\alpha_1,\alpha_2,-\alpha_3] - \Lambda)$
has finitely many negative eigenvalues.
By Lemma 6.1 in \cite{EP24} we have a finite dimensional subspace $V_1 \subset L^2(\Gamma_N)$, such that  
\begin{equation} \label{eq_pos}
\begin{aligned}
\big((\Lambda_0 + \Lambda'_0[\alpha_1,\alpha_2,-\alpha_3] - \Lambda) g, \, g\big)_{L^2(\Gamma_N)^3}
\geq  0,  \qquad g \in V_1^\perp.
\end{aligned}
\end{equation}
We will obtain a contradiction from this.
From Lemma \ref{lem_monotonicity_ineq1} we get that
\begin{align} \label{eq_upper_part2}
\big( (\Lambda_0 + \Lambda'_0[\alpha_1,\alpha_2,-\alpha_3] - \Lambda) g, \,g  \big)_{L^2(\Gamma_N)^3} 
&\leq
\int_\Omega (\mu - \mu_0- \alpha_2 \chi_B ) | \hat \nabla u_0|^2 \,dx \nonumber \\ 
&\quad +\int_\Omega (\lambda - \lambda_0 - \alpha_1 \chi_B ) | \nabla \cdot u_0|^2 \,dx \\ 
&\quad +\int_\Omega \omega^2(\rho_0 - \rho - \alpha_3 \chi_B ) |  u_0|^2 \,dx \nonumber 
\end{align}
where $u_0$ solves \eqref{eq_bvp1} with coefficients given by  $\lambda_0, \mu_0$ and $\rho_0$, and the  
boundary condition $g \in V_2^\perp$, where $V_2$ is a finite dimensional subspace. 
The  terms on right  hand side of \eqref{eq_upper_part2} can be split as 
\begin{align} \label{eq_mu_term} 
\int_\Omega (\mu - \mu_0- \alpha_2 \chi_B ) | \hat \nabla  u_0|^2 \,dx  
=
\int_{D_2} (\mu - \mu_0) | \hat \nabla  u_0|^2 \,dx  
- \int_{B} \alpha_2 \chi_B | \hat \nabla  u_0|^2 \,dx 
\end{align}
and 
\begin{align} \label{eq_lambda_term} 
\int_\Omega (\lambda - \lambda_0- \alpha_1 \chi_B ) | \nabla  \cdot u_0|^2 \,dx  
=
\int_{D_1} (\lambda - \lambda_0) |  \nabla \cdot u_0|^2 \,dx  
- \int_{B} \alpha_1 \chi_B | \hat \nabla  u_0|^2 \,dx 
\end{align}
and 
\begin{align} \label{eq_rho_term} 
{\color{black}\int_\Omega \omega^2(\rho_0 - \rho - \alpha_3 \chi_B ) |  u_0|^2 \,dx 
=
-\int_{D_3} \omega^2(\rho - \rho_0 ) |  u_0|^2 \,dx 
- \int_{B} \omega^2\alpha_3 \chi_B  |  u_0|^2 \,dx }
\end{align}
We now use localized solutions that become large in the set $B$ and small in $D$.
By choosing the sets $D'_1$ and $D'_2$ in Proposition 4.1 in \cite{EP24a}, as  $D'_1 = \osupp(D)$ and $D'_2 = B$,
and suitable sets $D_1$ and $D_2$,
we get a sequence $g_j  = (\gamma_\C u_{0,j})|_{\p\Omega} \in (V_1 \cup V_2 )^\perp$ of
boundary data that give the  solutions $u_{0,j}$ to \eqref{eq_bvp1}, with the coefficients
$\lambda_0, \mu_0$ and $\rho_0$, such that
$$
\| u_{0,j} \|_{ L^2(\osupp(D))^3 }, 
\;\| \hat \nabla u_{0,j} \|_{ L^2(\osupp(D))^{3\times 3} }, 
\;\| \nabla \cdot u_{0,j} \|_{ L^2(\osupp(D))} 
\to 0, 
$$
and such that
$$
\| u_{0,j} \|_{ L^2(B)^3 }, 
\;\| \hat \nabla u_{0,j} \|_{ L^2(B)^{3\times 3} }, 
\;\| \nabla \cdot u_{0,j} \|_{ L^2(B)} 
\to \infty, 
$$
as $j \to \infty$.
If $|\alpha| \neq 0$, then it  follows from these limits and equations \eqref{eq_mu_term}, 
\eqref{eq_lambda_term} and \eqref{eq_rho_term}, that \eqref{eq_upper_part2} gives the estimate 
\begin{align*} 
\big( (\Lambda_0 + \Lambda'_0[\alpha_1,\alpha_2,\alpha_3] - \Lambda) g_j, \,g_j  \big)_{L^2(\Gamma_N)^3} < 0, 
\end{align*}
when $j$ is large enough, with $g_j \in (V_1 \cup V_2 )^\perp \subset V_1 ^\perp$. 
But this is in contradiction with \eqref{eq_pos}. Part $(2)$ of the claim thus holds.

\end{proof}

For practical purposes, we cannot test for infinite eigenvalues. Hence, we slightly modify Theorem \ref{thm_Lin_inclusionDetection_Mod} according to Lemma 3.4 from \cite{E25}.

\begin{lem}\label{lem_Lin_inclusionDetection}
Let $D := D_1 \cup D_2 \cup D_3$, where the sets are as in \eqref{eq_lambdaMuRho2} and
$B \subset \Omega$  and {\color{black}$\alpha_j \geq 0$}, 
and set $\alpha:=(\alpha_1,\alpha_2,\alpha_3)$.
Let
$$
\mathcal{M}_k:= \# \big\{\sigma \in \spec(\Lambda_0 + \Lambda'_0[\alpha_1\chi_{B_k},\alpha_2\chi_{B_k},-\alpha_3\chi_{B_k}] - \Lambda)
\;:\; \sigma < 0 \big\},
$$
where $\Lambda_0$ and $\Lambda$ are the NtD-maps for the coefficients $\lambda_0,\mu_0,\rho_0$ and $\lambda,\mu,\rho$ respectively. 
Let further $\mathcal{B}=\left\{B_k\ |\ B_k\subset \Omega\right\}$ be a fixed finite set. 
Under the assumption that
\begin{align}
\int_{\Omega} 2(\mu-\mu_0)\Vert \hat{\nabla}u_0\Vert^2 + (\lambda-\lambda_0)\Vert \nabla\cdot u_0\Vert^2\,dx 
> \int_{\Omega} \omega^2 (\rho - \rho_0) \Vert u_0\Vert^2\, dx,
\end{align}
there exists a $\gamma_0 > 0$ and a $0\leq M_l$ such that the following holds: \\

 \begin{enumerate}

\item 
Assume that  $B_k \subset D_j$, for $j \in J$, for some $J \subset\{1,2,3\}$.  
Then for all $0\leq\alpha_j$ with $\alpha_j \leq \gamma_0$, $j \in J\cap\{1,2\}$, and $\alpha_j = 0$, 
$j \notin J$, we have that  $\mathcal{M}_k\leq M_l$.
\\

\item
If $B_k \not \subset \osupp(D)$, then for all $\alpha$, $|\alpha| \neq 0$, $\mathcal{M}_k > M_l$.

\end{enumerate}
\end{lem}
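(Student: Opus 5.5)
The plan is to derive Lemma \ref{lem_Lin_inclusionDetection} directly from Theorem \ref{thm_Lin_inclusionDetection_Mod}, using that the test family $\mathcal{B}$ is finite. Throughout, $\gamma_0$ is the constant from Theorem \ref{thm_Lin_inclusionDetection_Mod}; if it depends on the test set, we take the minimum over the finitely many $B_k$, which is still positive. The crucial point is that the threshold $M_l$ is chosen \emph{after} fixing $\mathcal{B}$ and the admissible $\alpha$. Because $M_l$ is allowed to depend on these finite data, we do not have to quantify how many negative eigenvalues occur.

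\textbf{Step 1 (finite set of counts).} Split the indices into two classes:
\begin{align*}
K_1 &:= \{k : B_k \subset D_j \text{ for some } J \text{ as in part (1)}\},\\
K_2 &:= \{k : B_k \not\subset \osupp(D)\}.
\end{align*}
For $k\in K_1$, part (1) of Theorem \ref{thm_Lin_inclusionDetection_Mod} applied to $B=B_k$ gives $\mathcal{M}_k<\infty$ for every admissible $\alpha$. Assumption \eqref{assumption} enters only here, exactly as in the proof of that theorem.

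To obtain a single finite number, we use the bound behind \eqref{eq_mono1}. The operator $\Lambda_0+\Lambda_0'[\cdot]-\Lambda$ is nonnegative on $V^\perp$, where $V$ is the finite-dimensional space from Lemma \ref{lem_monotonicity_ineq1}. By the min--max principle (Lemma 6.1 in \cite{EP24}), this gives $\mathcal{M}_k\le \dim V$. The space $V$ depends only on the coefficients $(\lambda_0,\mu_0,\rho_0)$, $(\lambda,\mu,\rho)$ and $\omega$, not on $\alpha$ or $B_k$. We therefore set $M_l:=\dim V$, or more crudely $M_l:=\max_{k\in K_1}\sup_\alpha \mathcal{M}_k$, which is finite by the same argument. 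This proves part (1).

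\textbf{Step 2 (part (2)).} For $k\in K_2$, part (2) of Theorem \ref{thm_Lin_inclusionDetection_Mod} with $B=B_k$ gives $\mathcal{M}_k=\infty>M_l$ for all $\alpha$ with $|\alpha|\neq 0$. This holds for any finite choice of $M_l$, so part (2) follows at once.

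\textbf{Main obstacle.} The delicate point is Step 1: we need a bound on $\mathcal{M}_k$ that is uniform in $\alpha$, not merely finite for each fixed $\alpha$. I would first try to show that the subspace $V$ in \eqref{eq_mono1} can be chosen independently of $\alpha$ in the admissible range. This seems plausible because the lower bound in \eqref{eq_mono_applied} involves only $g\in V^\perp$ with $V$ from Lemma \ref{lem_monotonicity_ineq1}, and the $\alpha$-terms are absorbed via $\alpha_j\le\gamma_0$.

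If that uniformity fails, the fallback is to fix $\alpha$ once and for all, as is done in practice. Then only finitely many operators are involved, and $M_l$ is the maximum of finitely many finite numbers. This mirrors the argument of Lemma 3.4 in \cite{E25}.
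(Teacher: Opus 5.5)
Your argument is correct and follows essentially the paper's route. The paper gives no separate proof: it defers to Theorem~\ref{thm_Lin_inclusionDetection_Mod} and Lemma 3.4 of \cite{E25}, namely finite counts for tests inside $D$, infinite counts for tests not contained in $\osupp(D)$, and a finite threshold $M_l$ between them. Your extra observation, that \eqref{eq_mono1} holds on $V^\perp$ with $V$ from Lemma~\ref{lem_monotonicity_ineq1} independent of $\alpha$ and $B_k$ so that $M_l=\dim V$ works uniformly, soundly handles the quantifier over all admissible $\alpha$, which the paper leaves implicit.
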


\noindent
\\
Since there are various sources of error in the data collection of a laboratory experiment, our mathematical methods must also be applicable to this noisy data. Possible sources of errors that must be considered include measurement errors (tolerances of the measuring instruments), stochastic errors, and numerical errors.
\\
\\
Thus, we continue with the consideration of noisy data. 

\section{Monotonicity methods for noisy data}
\label{noise_teil}

Next, we formulate the test for noisy data similar to Theorem 4.2 in \cite{E25}. The proof can be adopted as well.

 \begin{thm} \label{thm_Lin_inclusionDetection_noisy}
Let $D := D_1 \cup D_2 \cup D_3$, where the sets are as in \eqref{eq_lambdaMuRho} and let
$\mathcal{B}=\left\{B_k\ |\ B_k \subset \Omega\right\}$ be fixed and finite. Further, let $\alpha_j > 0$, 
and set $\alpha:=(\alpha_1,\alpha_2,\alpha_3)$. Let
$$
\mathcal{M}_k:= \# \big\{\sigma \in \spec(\Lambda_0 + \Lambda'_0[\alpha_1\chi_{B_k},\alpha_2\chi_{B_k},-\alpha_3\chi_{B_k}] - \Lambda)
\;:\; \sigma < -\delta \big\},
$$
where $\Lambda_0$ and $\Lambda$ are the NtD-maps for the coefficients $\lambda_0,\mu_0,\rho_0$ and $\lambda,\mu,\rho$ respectively. Further, let 
 \begin{align*}
 \Vert \Lambda^\delta(\lambda,\mu,\rho) - \Lambda(\lambda,\mu,\rho)\Vert < \delta.
 \end{align*}
 \noindent
Under the assumption that
\begin{align}
\int_{\Omega} 2(\mu-\mu_0)\Vert \hat{\nabla}u_0\Vert^2 + (\lambda-\lambda_0)\Vert \nabla\cdot u_0\Vert^2\,dx 
> \int_{\Omega} \omega^2 (\rho - \rho_0) \Vert u_0\Vert^2\, dx,
\end{align}
there exists a {\color{black}$\gamma_0 > 0$}, a $0\leq M_l$ and a maximal noise level $\delta_0>0$, such that for all $0 < \delta <\delta_0$ we have the following statements: \\
\begin{enumerate}

\item 
Assume that  $B_k \subset D_j$, for $j \in J$, for some $J \subset\{1,2,3\}$.  
Then for all $0\leq\alpha_j$ with $\alpha_j \leq  {\color{black}\gamma_0}$, $j \in J\cap\{1,2\}$, and $\alpha_j = 0$, 
$j \notin J$, we have that  $\mathcal{M}_k \leq M_l$.
\\

\item
If $B_k \not \subset \osupp(D)$, then for all $\alpha$, $|\alpha| \neq 0$, $\mathcal{M}_k >M_l$.\\


\end{enumerate}
\end{thm}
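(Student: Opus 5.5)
The argument runs parallel to the one for Lemma \ref{lem_Lin_inclusionDetection}: we treat the noisy operator as a perturbation of norm less than $\delta$ and exploit that a finite family $\mathcal{B}$ needs only finitely many thresholds. Throughout, $\mathcal{M}_k$ is read with $\Lambda$ replaced by the measured $\Lambda^\delta$, i.e. as the number of eigenvalues below $-\delta$ of the self-adjoint operator
$$T_k^\delta := \Lambda_0 + \Lambda'_0[\alpha_1\chi_{B_k},\alpha_2\chi_{B_k},-\alpha_3\chi_{B_k}] - \Lambda^\delta .$$
Let $T_k$ be the same operator with exact data $\Lambda$. Then $\|T_k^\delta - T_k\| < \delta$. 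Here $T_k$ is self-adjoint by Lemma \ref{lem_compact_self_adjoint} and the symmetry of $\Lambda_0,\Lambda$. We may assume $\Lambda^\delta$ is self-adjoint as well, replacing it by its symmetric part if necessary, which does not increase the error.

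\textbf{Part (1).} Take $B_k \subset D_j$ for $j\in J$, with $\alpha_j$ as stated. The proof of Theorem \ref{thm_Lin_inclusionDetection_Mod}, which uses assumption (\ref{assumption}), yields a finite-dimensional $V$ with $(T_k g,g)\ge 0$ for $g\in V^\perp$. Hence for $g\in V^\perp$ we get $(T_k^\delta g,g) \ge -\delta\|g\|^2$, and because of the strict inequality in fact $(T_k^\delta g,g) > -\delta\|g\|^2$ for $g\neq 0$.

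By the min–max principle, the number of eigenvalues of $T_k^\delta$ below $-\delta$ is at most $\dim V$. This needs some care because $T_k^\delta$ need not be compact, but the counting only uses the Courant–Fischer characterization on the subspace $V^\perp$, as in Lemma 6.1 of \cite{EP24}. We then set
$$M_l := \max_{k}\dim V_k ,$$
the maximum taken over the finitely many $B_k$ that fall under case (1). This number is finite because $\mathcal{B}$ is finite, and it does not depend on $\delta$.

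\textbf{Part (2).} Take $B_k \not\subset \osupp(D)$. We reuse the localized solutions $u_{0,j}$ from Proposition 4.1 of \cite{EP24a}, which by the upper bound (\ref{eq_upper_part2}) and the splitting (\ref{eq_mu_term})--(\ref{eq_rho_term}) give $(T_k g_j,g_j)\to-\infty$ after normalization against the energy on $B_k$. The goal is a quantitative version of this. Orthogonalizing such sequences successively against previously chosen data, we build for every $N$ an $N$-dimensional subspace $W_N$ on which $(T_k g,g) \le -c\|g\|^2$ for some fixed $c>0$, and in fact arbitrarily negative.

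Choose $N = M_l+1$ and let $c_k$ be the corresponding constant. Set
$$\delta_0 := \tfrac12\min_k c_k ,$$
the minimum over the finitely many $B_k$ in case (2). Then for $g\in W_N$,
$$(T_k^\delta g,g) \le -c_k\|g\|^2 + \delta\|g\|^2 < -\delta\|g\|^2 \quad\text{whenever } \delta<\delta_0 .$$
Min–max then gives at least $M_l+1$ eigenvalues of $T_k^\delta$ below $-\delta$, so $\mathcal{M}_k > M_l$. The admissible $\gamma_0$ is the minimum of the finitely many $\gamma_0$'s produced in part (1).

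\textbf{Main obstacle.} The hard step is the construction in part (2) of an $N$-dimensional subspace with a \emph{uniform} negative bound, as opposed to a single sequence. I would first use the finite-dimensional orthogonality freedom in Proposition 4.1 of \cite{EP24a}. Since the localized data can be taken orthogonal to any given finite-dimensional space, we can pick $g^{(1)},\dots,g^{(N)}$ successively with $(T_k g^{(i)},g^{(i)})\le -R$ and cross terms $|(T_k g^{(i)},g^{(m)})|$ small. The cross terms are controlled by making the energy of each new solution on $B_k$ dominate the interaction with the earlier ones. The resulting Gram-type matrix is then diagonally dominant and negative definite, which gives the required bound $-c_k$ on $W_N$.
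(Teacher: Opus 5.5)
Your overall route is the natural adaptation the paper has in mind when it simply defers to Theorem 4.2 of \cite{E25}: treat $\Lambda^\delta$ as a perturbation of norm $<\delta$ (you rightly read $\mathcal{M}_k$ with $\Lambda^\delta$ and the coefficients as in \eqref{eq_lambdaMuRho2}), bound the count below $-\delta$ by $\dim V$ in (1), exhibit an $(M_l+1)$-dimensional negative subspace in (2), and use finiteness of $\mathcal{B}$. Your part (1) is fine. Here $V$ comes from Lemma \ref{lem_monotonicity_ineq1}, so it depends neither on $k$ nor on $\alpha$, and that is what makes $M_l=\dim V$ uniform.

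\textbf{The step you call the main obstacle fails as sketched.} For normalized $g$ one has $(T_kg,g)\ge-\|T_k\|$, so nothing is ``arbitrarily negative''. The localized potentials give $(T_kg_j,g_j)\to-\infty$ only because $\|g_j\|\to\infty$. Orthogonalizing $g^{(m)}$ against the earlier $g^{(i)}$ does not affect the cross terms $(T_kg^{(i)},g^{(m)})$. The paper also has no bilinear bound for the $\Lambda_0-\Lambda$ part, since Lemma \ref{lem_monotonicity_ineq1} controls only the quadratic form. Moreover, the negativity condition $(T_kg^{(1)},g^{(1)})\,(T_kg^{(2)},g^{(2)})>(T_kg^{(1)},g^{(2)})^2$ is invariant under rescaling $g^{(2)}$, so making its energy on $B_k$ large does not help.

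None of this is needed. $T_k$ is compact and self-adjoint: the NtD maps are compact on $L^2(\Gamma_N)^3$, and Lemma \ref{lem_compact_self_adjoint} covers $\Lambda_0'$. By Theorem \ref{thm_Lin_inclusionDetection_Mod}(2), $T_k$ has infinitely many negative eigenvalues $\sigma_1(T_k)\le\sigma_2(T_k)\le\dots<0$. Put $c_k:=-\sigma_{M_l+1}(T_k)$ and let $W$ be the span of the first $M_l+1$ eigenvectors. Alternatively, use Weyl's inequality for min--max values, $|\sigma_n(T_k^\delta)-\sigma_n(T_k)|\le\|T_k^\delta-T_k\|<\delta$. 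If you want to keep localized potentials, choose $g^{(m)}\perp T_kg^{(1)},\dots,T_kg^{(m-1)}$; by self-adjointness the cross terms then vanish exactly.

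\textbf{The second gap is the order of quantifiers.} $T_k$ depends on $\alpha$, so $c_k$ and your $\delta_0=\tfrac12\min_kc_k$ do too. Yet item (2) asserts $\mathcal{M}_k>M_l$ for all $\alpha$ with $|\alpha|\neq0$ after $\delta_0$ has been chosen. This cannot be fixed by a better estimate. Take noise-free data $\Lambda^\delta=\Lambda$.
\begin{itemize}
\item By Lemma \ref{lem_Frechet}, $\|T_k(\alpha)-(\Lambda_0-\Lambda)\|\le C|\alpha|$, and the limit $\Lambda_0-\Lambda$ does not depend on $k$.
\item Item (1) with $\alpha=0$ (permitted by its wording, e.g.\ $J=\emptyset$) says $\Lambda_0-\Lambda$ has at most $M_l$ eigenvalues below $-\delta$ for every $\delta\in(0,\delta_0)$. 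Hence it has at most $M_l$ negative eigenvalues, and so $c_k(\alpha)=-\sigma_{M_l+1}(T_k(\alpha))\le C|\alpha|\to0$.
\item For $\delta$ with $-\delta\notin\spec(\Lambda_0-\Lambda)$, the count below $-\delta$ does not change for small $|\alpha|$.
\end{itemize}
So (1) and (2) are incompatible with an $\alpha$-independent $\delta_0$ as soon as some $B_k\not\subset\osupp(D)$.

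You should therefore state explicitly that $\alpha$ is fixed before $\delta_0$ and $M_l$, as the statement's opening ``let $\alpha_j>0$'' and Algorithm \ref{alg_shapeInclusion} suggest. Alternatively, let $\alpha$ range over a compact set avoiding $0$; there continuity of $\alpha\mapsto\sigma_{M_l+1}(T_k(\alpha))$ gives a uniform $c_k$. With that reading and the repair above, your argument proves the theorem.
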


\noindent
\\
In order to close this section, we formulate the corresponding algorithm:
\begin{algorithm} 
\caption{Linearized reconstruction of  $\osupp(D) \subset \Omega$.}\label{alg_shapeInclusion}
\begin{algorithmic}[1]

\STATE  Choose a fixed finiteset $\mathcal{B} = \{ B_k\,| B_k\subset\Omega\}$ and set $\mathcal{A} = \{\}$.

\STATE  Choose $\tilde{M}_l$.


\FOR{  $B \in \mathcal{B}$}

\FOR{  $\Lambda^\flat$ with parameters varied as suggested by Theorem \ref{thm_Lin_inclusionDetection_noisy}}

\STATE Compute $\displaystyle\mathcal{M}_B := \sum_{\sigma_k < -\delta} 1$, where $\sigma_k$ are the eigenvalues of 
\STATE $\Lambda_0 + \Lambda'_0[\alpha_1,\alpha_2,-\alpha_3] - \Lambda^\delta$ 

\IF {$ \mathcal{M}_B < \tilde{M}_l$}

\STATE 
Add $B$ to the approximating collection $\mathcal{A}$, since 
Theorem \ref{thm_Lin_inclusionDetection_noisy} suggests 
\STATE 
that $B \subset \osupp(D)$. 

\ELSE

\STATE Discard $B$, since by Theorem \ref{thm_Lin_inclusionDetection_noisy} $B \not \subset D_j$, $j=1,2,3$.

\ENDIF
\ENDFOR
\ENDFOR

\STATE  Compute the union of all elements in $\mathcal{A}$ and all components of 
$\Omega \setminus \cup \mathcal{A}$ not connected 
\STATE to $\p\Omega$. The resulting set is an approximation of $\osupp(D)$.

\end{algorithmic}
\end{algorithm}
\noindent
\\
We want to remark, that we choose $\tilde{M}_l$ as described in \cite{E25}. In more detail, we take a look at the negative eigenvalues for all considered test inclusions and introduce $\tilde{M}_l$ such that we mark the test inclusions with a (significant) lower number of negative eigenvalues than the rest of the test inclusions.

\section{Inversion of experimental data} \label{sec_numerics}
The material parameters of the elastic body are given in Table \ref{Table_parameter}.

\renewcommand{\arraystretch}{1.4}
\begin{table}[h!]
\begin{center}
\begin{tabular}{ |c|c| c | c |}  
\hline
material & $\lambda_i$ in $[Pa]$ & $\mu_i$ in $[Pa]$  & $\rho_i$ in $[kg/m^3]$ \\
\hline
$i=0$: background (Makrolon)&  $2.8910\cdot 10^9$   &  $1.1808\cdot10^9$  &  $1171$\\
 \hline
$i=1$: inclusion (aluminum)&  $5.1084\cdot 10^{10}$ &  $2.6316\cdot 10^{10}$  &   $2700$\\
\hline
\end{tabular}
\end{center}
\caption{Lam\'e parameter and density for the experimental setup.}
\label{Table_parameter}
\end{table}
\noindent
\\
The experiment is conducted by applying a sine sweep covering the frequency bands from $20Hz -27Hz$, $40Hz-45Hz$ and $55Hz-57Hz$. The frequencies and especially the gaps in the frequency bands are chosen in such a way that unwanted strong vibrations of the test objects are avoided in order to reduce expected measurement noise. For the chosen frequencies, Assumption (\ref{assumption}) clearly holds. 
More explicitly, the left hand side of the inequality is of order $10^{-16}$, while the right hand side is of order $10^{-19}$. According to our numerical simulations, the Assumption (\ref{assumption}) holds up to frequencies in the range up to $100$ Hz due to the Lam\' parameters given in $GPa$, i.e., $10^9\frac{kg m}{s^2m}$, while the density is given in $\frac{kg}{m^3}$.
\\
\begin{figure}[H]
\centering 
\includegraphics[width=1\textwidth]{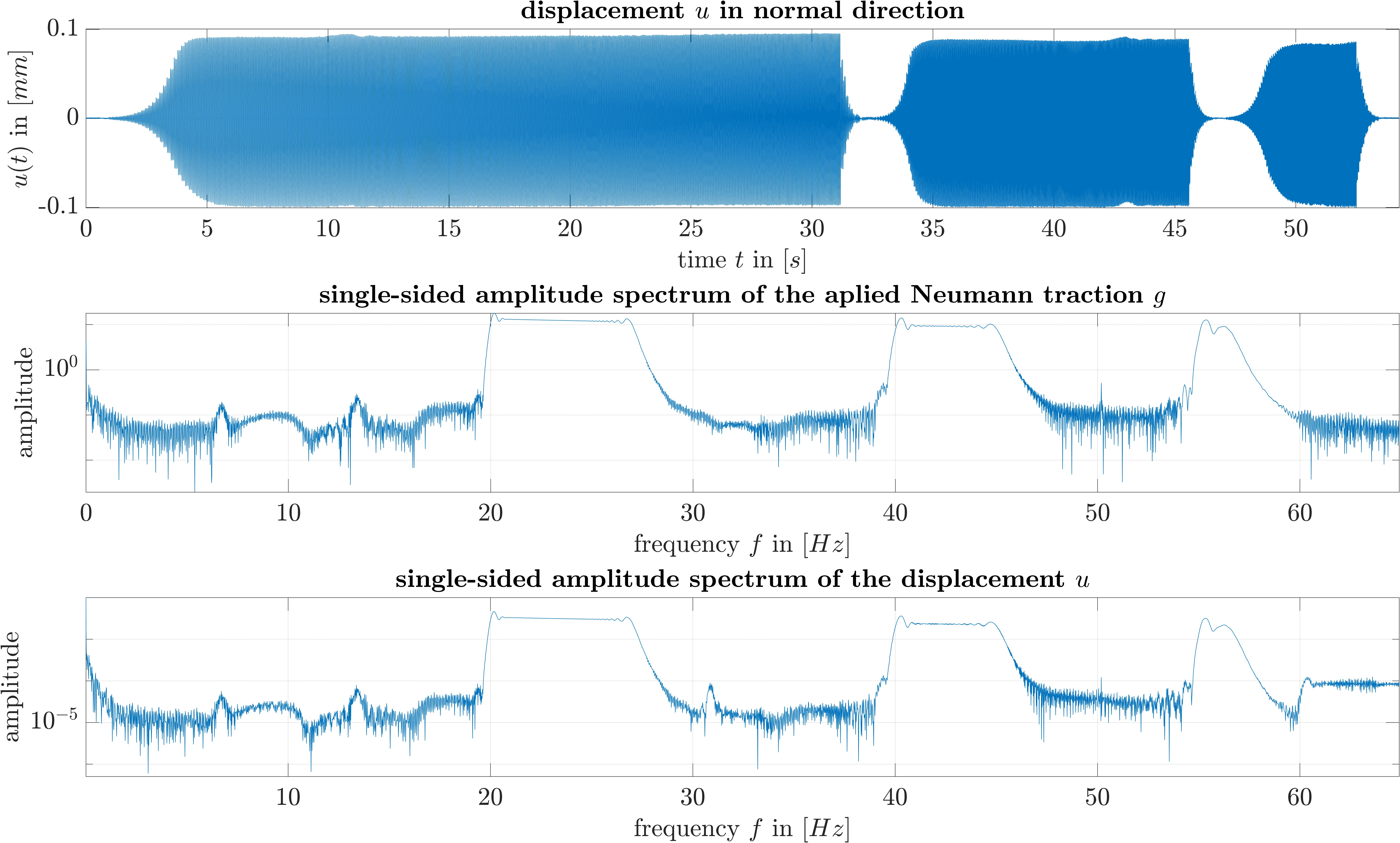}
\caption{Sine sweep applied to each Neumann boundary}
\end{figure}
\noindent
The displacements on the Neumann boundary were then measured and the applied boundary forces were recorded by the shaker. Both data sets were then made consistent by e.g. cutting of data recorded before and after the sweep. At last, both the applied force and the measured displacements were Fourier transformed into the frequency domain for a fixed frequency, where the monotonicity tests as described in section \ref{noise_teil} are applied. For that, missing data, which could not be measured due to the experimental setup were interpolated according to the spline interpolation method described in \cite{EM21}, where a similar experiment based on the stationary wave equation was performed and described. 
\\
The reconstruction was conducted with 25 test inclusions ($5\times5$) of dimension $6cm\times6cm\times 1cm$. A reconstruction using a finer grid was not possible due to the inherent noise in the data caused by vibrations.
\\
\\
The plots are organized as follows: The illustration shows the plate with the test inclusions, the unknown inclusions, the points where the force is applied or the displacement is measured, and the position of the plate holder. The green marking of a test inclusion means that it was detected as lying outside the unknown inclusions, and red means that a test inclusion was detected as lying within the unknown inclusions.

\subsection*{Centralized inclusion with 12cm diameter}

The reconstruction of the centralized aluminum inclusion with a diameter of $12cm$ was consistent in all three frequency bands and can be seen in Figure \ref{Fig_12cm}. Exemplary, this result was obtained for the parameter sets mentioned in Table \ref{tab_parameter_12cm}.
\\
\\
It should be noted that the experiment is heavily burdened by noise due to its setup. The Makrolon plate is only fixed in place by clamping the plate in two points in the middle of two opposing sides, so that most of the plate is free-floating. Further, we excite the plate with oscillations close to as well as far from the Dirichlet boundary. Nevertheless, we obtain a consistent reconstruction of the inclusion over all frequency bands. In a further application, the setup should be adapted to reduce unwanted noise as much as possible by, e.g., clamping the plate in its four corners and provide more robustness to the algorithm by taking more measurements.

\begin{figure}[H]
\centering 
\includegraphics[width=0.4\textwidth]{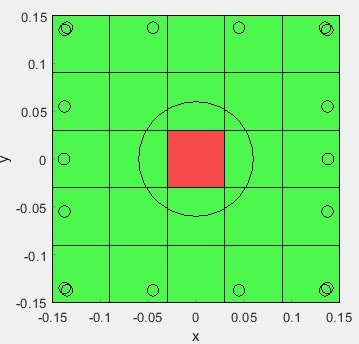}
\caption{Detected inclusions of the experiment with a 12cm central inclusion. The inclusion is marked in red.}\label{Fig_12cm}
\end{figure}

\begin{table*}[h]
	\centering
		\begin{tabular}{|c|c|c|}\hline
		$\omega$ & $M$ & $\delta$\\\hline
		21$\frac{rad}{s}$ &6& $9.775038\cdot 10^{-7}$\\\hline
		41$\frac{rad}{s}$ &6& $9.7283458500\cdot 10^{-7}$\\\hline
		55.4$\frac{rad}{s}$ &6& $7.929299\cdot 10^{-7}$\\\hline
		\end{tabular}
				\\
		\vspace{0.5cm}
		\caption{
		Parameter combinations resulting in the reconstruction presented in Figure \ref{Fig_12cm}.}
		\label{tab_parameter_12cm}
\end{table*}

\subsection*{Two decentralized inclusions with 10cm diameter}

As a further and harder test sample, we tested a Makrolon plate with two inclusions with a diameter of $10cm$. The inclusions are positioned in opposite corners. Due to the position of the Dirichlet boundary (in the middle of two opposing sides) and the position of the heavier aluminium inclusions in the corners, we could detect very strong fluttering of the corners of the test object perpendicular to the direction of excitation. The higher the oscillation frequency was applied to the boundary, the more fluttering was observed. This was not the case in the first test, where the inclusion is centralized exactly between the Dirichlet boundaries. As expected, it was not possible to reconstruction the inclusion in the wave bands $40Hz-45Hz$ and $55Hz-57Hz$.
\\
\\
However in the lower wave band $20Hz -27Hz$, we were able to reconstruct and separate the inclusion correctly (see Figure \ref{Fig_2x10cm}) for low frequencies. It should be noted that the algorithm can only make correct statements about test inclusions which lie completely inside or outside the inclusion to be reconstructed. In order to obtain a more precise approximation of the boundary of the inclusion to be detected, finer test inclusions are necessary, which was not possible for this experiment as already mentioned earlier. For the reconstruction with higher frequencies, more stability has to be introduced to the experiment (e.g., clamping in the corners), which was not possible for this experiment due to budget and time constraints.

\begin{figure}[H]
\centering 
\includegraphics[width=0.4\textwidth]{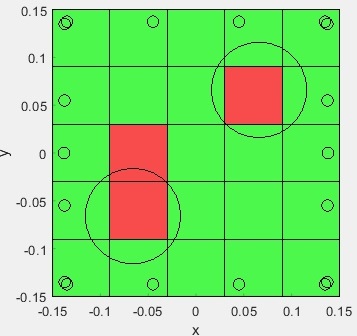}
\caption{Detected inclusions of the experiment wit a two separate $10cm$ decentralized inclusion. The reconstructed inclusion is marked in red. Parameters: $\omega=20.2 \frac{rad}{s}$, $M=6$, $\delta=1.53598375\cdot 10^{-6}$}
\label{Fig_2x10cm}
\end{figure}
\noindent
Finally, we want to remark that compared with the results for the stationary case (see \cite{EM21}), we obtain better results especially for the two decentralized inclusions.

\bigskip
\noindent
{\bf{Acknowledgement}} \\
The first author thanks the German Research Foundation
(DFG) for funding the project ”Inclusion Reconstruction with Monotonicity-based
Methods for the Elasto-oscillatory Wave Equation” (reference number 499303971)
at the Goethe-University Frankfurt, where the major part of this article has been
conducted during this project. 
\\
We would like to thank Johannes Käsgen for his support in planning and carrying out the experiment at the Fraunhofer Institute for Structural Durability and System Reliability LBF.

\section{Appendix}
\noindent
Here we summaries the required background from \cite{EP24a} which is used for the well-posedness  and apriori estimates, for the weak solutions 
to the source problem
$$
u \in \mathcal{V} :=  \{ u \in H^1(\Omega)^3 \,:\, u|_{\Gamma_D} = 0 \}
$$
to the formal boundary value problem
\begin{align}  \label{eq_bvp2}
\begin{cases}
\nabla \cdot (\C\,  \hat \nabla u )  + \omega^2\rho u + \tau u &=  F + \nabla \cdot A \\
\;\quad\quad\quad\quad(\gamma_\C u ) |_{\Gamma_N} &=  A \nu |_{\Gamma_N} , \\	
 \quad\quad\quad\quad\quad \quad u |_{\Gamma_D} &= 0,	
\end{cases}
\end{align}
where $F \in L^2(\Omega)^3$ and $A \in L^2( \Omega)^{3 \times 3}$. 

\begin{prop} [see Proposition 8.1 from \cite{EP24}]\label{prop_elipEst_1}
There exists a $\tau_0 \leq 0$, for which
the boundary value problem in \eqref{eq_bvp2} admits a unique weak solution
$u \in \mathcal{V}$, which satisfies
\begin{align}  \label{eq_aprioriEst}
\| u \|_{ H^1(\Omega)^3} 
\leq C 
(\| F \|_{L^2( \Omega)^{3}}+\| A \|_{L^2( \Omega)^{3 \times 3}}).
\end{align}
\end{prop}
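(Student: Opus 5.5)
We prove Proposition \ref{prop_elipEst_1} by the Lax--Milgram lemma on the closed subspace $\mathcal{V}\subset H^1(\Omega)^3$, choosing the shift $\tau$ negative enough to absorb the indefinite zeroth-order term $\omega^2\rho u$. Multiplying \eqref{eq_bvp2} by a test function $v\in\mathcal{V}$ and integrating by parts, the boundary term $\int_{\Gamma_N} (\gamma_\C u)\cdot v\,dS$ cancels against $\int_{\Gamma_N} A\nu\cdot v\,dS$. The term on $\Gamma_D$ vanishes because $v|_{\Gamma_D}=0$. The resulting weak formulation is: find $u\in\mathcal{V}$ with
\begin{align*}
a_\tau(u,v):=\int_\Omega 2\mu\,\hat\nabla u:\hat\nabla v+\lambda\,\nabla\cdot u\,\nabla\cdot v-(\omega^2\rho+\tau)\,u\cdot v\,dx=\int_\Omega A:\nabla v - F\cdot v\,dx
\end{align*}
for all $v\in\mathcal{V}$. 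We take this as the definition of a weak solution of \eqref{eq_bvp2}. Its sign convention matches that of $B$ in \eqref{eq_weak}.

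The first step is boundedness. Since $\lambda,\mu,\rho\in L^\infty_+(\Omega)$, Cauchy--Schwarz gives $|a_\tau(u,v)|\le C_\tau\|u\|_{H^1}\|v\|_{H^1}$. The right-hand side $\ell(v)=\int_\Omega A:\nabla v-F\cdot v\,dx$ is a bounded functional with $\|\ell\|_{\mathcal{V}^*}\le \|A\|_{L^2}+\|F\|_{L^2}$.

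The second step is coercivity, which we expect to be the main obstacle. Since $\lambda\ge0$, we have $a_\tau(u,u)\ge 2\operatorname{essinf}\mu\,\|\hat\nabla u\|^2_{L^2}+(-\tau-\omega^2\|\rho\|_\infty)\|u\|^2_{L^2}$. To control the full $H^1$ norm by the symmetric gradient we need Korn's inequality in the form $\|u\|^2_{H^1}\le C_K(\|\hat\nabla u\|^2_{L^2}+\|u\|^2_{L^2})$. This second Korn inequality holds on Lipschitz domains without any boundary condition, so $\Gamma_D$ may be arbitrary, even empty, which matters since only $\Gamma_N\neq\emptyset$ is assumed. We then choose $\tau_0<-\omega^2\|\rho\|_{L^\infty}-1$; any $\tau\le\tau_0$ works. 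With this choice $a_\tau(u,u)\ge c\|u\|^2_{H^1}$ with $c=\min(2\operatorname{essinf}\mu,1)/C_K>0$.

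The third step applies Lax--Milgram on $\mathcal{V}$. It yields a unique $u\in\mathcal{V}$ and the bound $c\|u\|^2_{H^1}\le a_\tau(u,u)=\ell(u)\le(\|F\|_{L^2}+\|A\|_{L^2})\|u\|_{H^1}$, which is \eqref{eq_aprioriEst} with $C=1/c$. The only nontrivial ingredient is Korn's inequality. The constant $C$ depends on $\omega,\rho,\mu,\tau$ and $\Omega$, but not on $F$ or $A$, as required.
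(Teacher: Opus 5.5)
Your proof is correct and follows the standard route behind the cited Proposition 8.1 of \cite{EP24}; the present paper only quotes the result and gives no proof of its own. The steps are the weak formulation on $\mathcal{V}$, a shift $\tau\le\tau_0<-\omega^2\|\rho\|_{L^\infty}$ to absorb the indefinite zeroth-order term, coercivity from the second Korn inequality (which, as you note, needs no condition on $\Gamma_D$, given that $\Omega$ is a bounded Lipschitz domain), and Lax--Milgram for existence, uniqueness and the bound.
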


\noindent
\\
\\
\end{document}